\newcommand{\wt}[1]{\widetilde {#1}}
\newcommand{\ti}{\tilde}
\newcommand{\R}{\mathbb{R}}
\newcommand{\Sf}{\mathbb{S}}
\renewcommand{\bar}{\overline}
\numberwithin{equation}{section}
\newtheorem{thm}{Theorem}[section]
\newtheorem{cor}[thm]{Corollary}
\newtheorem{lem}[thm]{Lemma}
\newtheorem{prop}[thm]{Proposition}
\theoremstyle{remark}
\newcommand{\Del}[1]{}
\def\sign{\mathrm{sign}}
\begin{document}

\author{Elisabetta Chiodaroli and Joachim Krieger}
\title{A class of large global solutions\\ for the Wave--Map equation}
\maketitle
\centerline{EPFL Lausanne}

\centerline{Station 8, CH-1015 Lausanne, Switzerland}

\begin{abstract}
In this paper we consider the equation for equivariant wave maps from $\R^{3+1}$ to $\Sf^3$ 
and we prove global in forward time existence of certain $C^\infty$-smooth 
solutions which have infinite critical Sobolev norm $\dot{H}^{\frac{3}{2}}(\R^3)\times \dot{H}^{\frac{1}{2}}(\R^3)$.
Our construction provides solutions which can moreover satisfy the additional size condition $\|u(0, \cdot)\|_{L^\infty(|x|\geq 1)}>M$ for arbitrarily chosen $M>0$.
These solutions are also stable under suitable perturbations.
Our method, strongly inspired by \cite{krsc}, is based on a perturbative approach around suitably constructed approximate 
self--similar solutions.

\end{abstract}
\section{Introduction}
\subsection{Corotational wave maps}
Let $M$ be the Minkowski space $\R^{n,1}$, with coordinates $x=(x^0, x^1,\dots, x^n)=(t, \bar{x})$ and let $N$
be a smooth, complete, rotationally symmetric $k$-dimensional Riemannian manifold without boundary.
Following Tachikawa \cite{tac}, we can then identify $N$, as a warped product, with a ball of radius $R\in \R^+ \cup \{\infty\}$
in $\R^k$ equipped with a metric of the form
\begin{equation}
 ds^2=du^2+g^2(u) d\theta^2,
\end{equation}
where $(u, \theta)$ are polar coordinates on $\R^k$, $d\theta^2$ is the standard metric on the sphere $\Sf^{k-1}$,
and $g: \R \rightarrow \R$ is smooth and odd, 
\begin{equation}\label{eq:g}
 g(0)=0, \quad g'(0)=1.
\end{equation}
With these notations in hand, we can define a \textit{wave map} $U:M\rightarrow N$ as a stationary point (with respect
to compactly supported variations) of the functional
\begin{equation}\label{eq:functional}
 \mathcal{L}[U]= \frac{1}{2} \int_M \langle \partial_{\mu} U, \partial^{\mu} U \rangle
= \frac{1}{2}\int_M \partial_{\mu} u \partial^{\mu} u +g^2(u) \Gamma_{A,B} \partial_{\mu} \theta^A \partial^\mu \theta^B.
\end{equation}
Hence, they satisfy the Euler-Lagrange equations: if we denote the vector valued map $U$ as $U:=(u, \theta) \in \R^k$ then 
it satisfies
\begin{equation}\label{eq:Euler-Lagrange}
\bigg \{
\begin{array}{r}
\partial_{\mu} \partial^{\mu} u +g(u)g'(u) \Gamma_{A,B} \partial_{\mu} \theta^A \partial^{\mu} \theta^B \; =\;0 \\
\partial^{\mu} (g^2(u)\Gamma_{A,B} \partial_{\mu} \theta^B)\;=\;0. \\
\end{array}
\end{equation}
We also introduce spatial polar coordinates $(t, r, \omega)\in \R\times \R^+\times \Sf^{n-1}$ on $M$.
In these coordinates the metric on $M$ takes the form
$$ds^2= - dt^2+dr^2+r^2 d\omega^2.$$
Since $N$ is assumed to be rotationally symmetric, it becomes natural to consider \textit{equivariant}
wave maps by requiring that the orbit of any point in $M$ under spatial rotations maps into the orbit of
the image point in $N$. When this mapping of the orbits has degree $1$, we call the map \textit{corotational}.
We suppose to be in the equivariant framework, thus we require that
\begin{equation}\label{eq:corotational}
 u= u(t,r) \quad \text{and} \quad \theta=\theta(\omega).
\end{equation}
It then follows that $\theta: \Sf^{n-1} \rightarrow \Sf^{k-1}$ has to be an \textit{eigenmap}, i.e. a 
harmonic map of constant energy density
$$e=|\nabla_{\omega} \theta|^2.$$
Under this hypothesis, the wave map system \eqref{eq:Euler-Lagrange} simplifies
and reduces to the following simple scalar wave equation for the
spatially radial function $u:M \rightarrow \R$
\begin{equation}\label{eq:scalar wave equation general}
 u_{tt} - u_{rr} - \frac{n-1}{r}u_r + \frac{f(u)}{r^2} = 0,\quad f(u) = eg(u) g'(u)
\end{equation}

\subsection{Wave maps into $\Sf^3$}
We are interested in equivariant corotational wave maps from $M=\R^{3+1} \rightarrow \Sf^3=N$.
In this case, it is immediate to represent the target manifold $\Sf^3$, as a warped product.
Let us show the standard way how the standard $3$-dimensional sphere $\Sf^3$ can be written with a rotationally
symmetric metric of the type $du^2 + g^2(u) ds_2^2$ where $ds_2^2$ is 
the canonical metric on $\Sf^2\subset \R^3$.
We consider the map
\begin{align*}
 I  :  (0, \pi) \times \Sf^2 &\rightarrow \R\times \R^3 \\
 I(u, \theta) &= (\cos u, \sin u \cdot \theta)
\end{align*}
which maps into the unit sphere in $\R^4$.
In order to see that $I$ is a Riemannian isometry we compute the canonical metric on $\R\times \R^3$ (``can'')
using the coordinates $(\cos u, \sin u \cdot \theta)$.
In order to  carry out the computation we use that 
\begin{align*}
 1 &= (\theta^1)^2+ (\theta^2)^2+ (\theta^3)^2 \\
 0 &= 2( \theta^1 d\theta^1+ \theta^2 d\theta^2 + \theta^3 d\theta^3).
 \end{align*}
Thus we obtain
\begin{align*}
 \text{can}&= (d\cos u )^2+ \sum \delta_{i,j} \; d(\sin u \; \theta^i)\; d(\sin u\; \theta^j)  \\
 &= \sin^2 u \; du^2 + \sum \delta_{ij} \left( \theta^i \cos u \; du + \sin u \; d\theta^i\right) \left(\theta^j \cos u\; du + \sin u \;d\theta^j \right)\\
 &= \sin^2 u \; du^2 +\cos^2 u \; du^2 + \sin^2 u \left( \sum (d \theta^i)^2\right) \\
 &= du^2 + \sin^2 u \left( \sum (d \theta^i)^2 \right),
\end{align*}
and the claim follows from the fact that $\sum (d \theta^i)^2$ is exactly the canonical metric $ds_2^2$.
According to our notation, the function $g$ is here chosen to be $g(u) := \sin (u)$ and it satisfies $g(0)=0$ and $g'(0)=1$.
Since we are dealing with $M= \R^{3+1}$, we have here $n=k=3$: for  $(t, r, \omega)\in \R\times \R^+\times \Sf^{2}$ on $M$ and $U:=(u, \theta)\in \R\times \Sf^2$
we can choose the map $\theta= \theta(\omega)$ to be the identity map from $\Sf^2$ to $\Sf^2$, whence $e=2$. The same ``ansatz'' was also considered in \cite{scu} for harmonic maps into spheres.
It is now clear that the problem of looking for equivariant and corotational wave maps from $\R^{3+1}$ to $\Sf^3$
reduces to solving the following equation (see \eqref{eq:scalar wave equation general})
\begin{equation}\label{eq:scalar wave equation}
 u_{tt} - u_{rr} - \frac{2}{r}u_r + \frac{f(u)}{r^2} = 0, \quad f(u) = 2\sin u \;\cos u
\end{equation}
on $\R^{1+1}$.

\section{Background literature and main result}

The initial value problem for the
wave map equation has drawn a lot of attention in the mathematical community in the last twenty years. In particular, 
wave maps have been studied extensively in the case of a flat background $M$. It is impossible here to account for
the large amount of publications in the subject, hence we will quote some important contributions.

Let us make a short digression to explain the main problems related to the wave maps equation with particular
attention to the case of flat backgrounds.
As for other nonlinear evolutionary equations the first issues one is confronted with
are the existence of global classical solutions and the development of singularities.
The local--in--time existence is rather standard but it is challenging to identify classes of initial data for which global existence holds or on the contrary 
the identification of specific initial data that lead to a breakdown (blow--up) of the solution in finite time.
If blow--up occurs it is interesting to get the idea behind its formation.
For scaling invariant equations with a positive energy, such as the wave maps equation (see \cite{dosca}), heuristically one would expect
finite--time blow--up when the shrinking of the solution is energetically favorable, i.e. in the so called energy supercritical case,
while global existence should occur when shrinking to smaller scales is energetically prohibited, i.e. in the energy subcritical space.
The limit case when the energy itself is scaling invariant is called energy critical.
For wave maps, the criticality classification depends on the spatial dimension of the base manifold $M$. The equation is
energy subcritical, critical, or supercritical if dim$M=1+1$, dim$M=2+1$ or dim$M\geq3+1$, respectively. 

As anticipated in the previous section, in this paper we study the simplest energy supercritical case: corotational wave
maps from $(3+1)$- Minkowski space to the three sphere which satisfy equation \eqref{eq:scalar wave equation}.
Global well-posedness of the Cauchy problem for this equation when data are small in a sufficiently high Sobolev space follows
from \cite{sid}. 
The existence of global weak solutions is shown in \cite{sh} for any initial data of finite energy, but otherwise arbitrary.
Furthermore, a number of results concerning the Cauchy problem for
equivariant wave maps are obtained in \cite{sta}; in particular, local well-posedness with minimal regularity requirements for the initial data is studied.  
On the other hand, in \cite{sh}, Shatah has shown that in the case $M=\R^{3+1}$ and $N= \Sf^3$ the corotational wave map problem \eqref{eq:scalar wave equation}
admits self--similar solutions, thus making it possible 
to pose Cauchy problems with smooth data whose solutions develop singularities in finite time. This 
initiated the construction of blow--up solutions in the form of self-similar solutions: for further generalizations
to more general targets and examples of non--uniqueness using self--similar blow-up profiles see \cite{sta} and \cite{casta}.
As usual, by exploiting finite speed of propagation,a self-similar solution can be used to construct a solution with compactly supported
initial data that breaks down in finite time. In fact, \eqref{eq:scalar wave equation} admits many self-similar solutions \cite{biz} and a particular one
exhibiting blow--up was given in closed form in \cite{tusp} and is known as \textit{ground state} or \textit{fundamental self--similar solution}. In \cite{don}, the blow--up of the ground state
is shown to be stable indicating that blow--up is a generic situation, see also \cite{donac}, \cite{donac1} and \cite{dosca}.
It is clear that self--similar solutions play a crucial role in the blow--up theory for equation \eqref{eq:scalar wave equation}; they correspond to self--similar 
data at the time of blow--up. Of course these solutions leave the standard scaling critical Sobolev space $\dot{H}^{\frac{3}{2}} $ at the blow--up time.
In \cite{ge}, the author introduces a Besov space--based framework which includes the blowing up solutions of Shatah \cite{sh}
and Bizo{\'n} \cite{biz}; in particular two types of solutions of \eqref{eq:scalar wave equation} 
are constructed: on the one hand existence, uniqueness and scattering of solutions starting from data which possess infinite critical Sobolev norm,
but are small in the sense of suitable Besov spaces is proven (i.e. in these Besov spaces blow--up does not occur), on the other hand 
large (in the sense of Besov spaces) data can be considered only in the strictly self--similar case where existence still holds but uniqueness is lost.
\smallskip

Moving from this background literature, where essentially global well-posedness for equation \eqref{eq:scalar wave equation} is known
only for finite critical norm and self--similar blow--up seems a generic situation, we present a completely new result on global existence of smooth 
solutions of \eqref{eq:scalar wave equation} with infinite critical Sobolev norm $\dot{H}^{\frac{3}{2}}\times \dot{H}^{\frac{1}{2}}$.
Moreover, we can construct such solutions so that they satisfy also the size condition $\|u(0, \cdot)\|_{L^\infty(r\geq 1)}>M$ for arbitrarily chosen $M>0$.
The building block for our construction are self--similar solutions which are smooth away from the light cone but singular across it.
Our method, strongly inspired by \cite{krsc}, first consists in regularizing the self--similar solutions near the light cone, thus obtaining
approximate self--similar solutions and then proceeds by solving a perturbative problem around the approximate self--similar solutions so to generate
global solutions. In fact, the regularization destroys the scaling invariance and this turns out
to be important for the ensuing perturbative argument. 
The smooth data thus constructed have infinite critical norm 
$$\|u[0]\|_{\dot{H}^{\frac{3}{2}}(\R^3)\times \dot{H}^{\frac{1}{2}}(\R^3)}=\infty$$
only because of insufficient decay at
infinity and not because of some singular behavior in finite space-time.

More precisely the main theorem of this paper is the following.

\begin{thm}\label{th:main}
The equation \eqref{eq:scalar wave equation} for corotational equivariant wave maps from $\R^{3+1}$ to $\Sf^3$ admits
 smooth data $(f,g)\in C^{\infty} \times C^{\infty}$ 
decaying at infinity to zero, satisfying
$$\| (f,g)\|_{\dot{H}^{\frac{3}{2}}(\R^3)\times \dot{H}^{\frac{1}{2}}(\R^3)}=\infty \qquad \text{but}\qquad \| (f,g)\|_{\dot{H}^{s}(\R^3)\times \dot{H}^{s-1}(\R^3)}<\infty $$
for any $s>\frac{3}{2}$ and such that the corresponding evolution of \eqref{eq:scalar wave equation} exists globally in forward time as 
a $C^\infty$--smooth solution. 
In fact the initial data can be chosen such that 
$$\| f\|_{L^\infty(r\geq1)}> M,$$
for arbitrary $M>0$.
The solutions thus obtained are stable with respect to a certain class of perturbations.
\end{thm}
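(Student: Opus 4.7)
The plan is to follow, in the spirit of \cite{krsc}, a perturbative construction around an \emph{approximate self-similar} profile that is $C^\infty$ in space-time. The building block is a truly self-similar solution of \eqref{eq:scalar wave equation} whose only singularity sits on the forward light cone; smoothing out this singularity produces a smooth background on top of which a global remainder is solved for in forward time.

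\textbf{Self-similar profile and regularization.} I first examine the one-parameter family of self-similar solutions $u(t,r)=U(r/t)$ of \eqref{eq:scalar wave equation}, see \cite{biz,tusp}. With $a=r/t$ the equation becomes a singular second-order ODE on $a\in(0,\infty)$ whose principal part degenerates at the light cone $\{a=1\}$; generic members $U_0$ of the family are $C^\infty$ separately on $\{0<a<1\}$ and on $\{a>1\}$ but only of finite regularity across $a=1$. From $U_0$ I manufacture an approximate profile $U_{\mathrm{app}}$ by a smooth cutoff-and-mollify in a thin annulus $|a-1|\le\delta$. The corresponding space-time function $u_{\mathrm{app}}(t,r):=U_{\mathrm{app}}(r/t)$ is then $C^\infty$ on $\{t>0,\;r\ge 0\}$ and solves \eqref{eq:scalar wave equation} up to an inhomogeneity $e_{\mathrm{app}}$ supported in the thin region $|r/t-1|\le\delta$; crucially, the regularization destroys the scaling invariance, so that $e_{\mathrm{app}}$ acquires a genuine time decay as $t\to\infty$ which will later fuel the perturbative estimates.

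\textbf{Initial data and norm computation.} Setting $t_0=1$ (by scaling) I take $(f,g):=(u_{\mathrm{app}}(1,\cdot),\,\partial_t u_{\mathrm{app}}(1,\cdot))$, composed if needed with a smooth radial cutoff that gently sends $f$ to zero at infinity without affecting the divergence of its critical norm. A weighted Fourier/Hardy computation, exploiting the asymptotic $U_{\mathrm{app}}(a)\sim c\,a^{-\alpha}$ as $a\to\infty$ for some explicit $\alpha<3/2$ read off the ODE, yields
\[
\|(f,g)\|_{\dot H^{3/2}(\R^3)\times\dot H^{1/2}(\R^3)}=\infty,\qquad \|(f,g)\|_{\dot H^{s}(\R^3)\times\dot H^{s-1}(\R^3)}<\infty\;\;\text{for every } s>3/2,
\]
the obstruction being only the slow spatial decay. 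The freedom to replace $U_0$ by $U_0(\lambda\,\cdot)$, i.e.\ to choose the parameter in the self-similar family, makes $\|f\|_{L^\infty(r\ge 1)}$ as large as desired.

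\textbf{Perturbative global existence and stability.} Writing $u=u_{\mathrm{app}}+\eps$, the remainder satisfies
\[
\eps_{tt}-\eps_{rr}-\tfrac{2}{r}\eps_r+\tfrac{f'(u_{\mathrm{app}})}{r^2}\eps \;=\; -e_{\mathrm{app}}-\mathcal{N}(u_{\mathrm{app}},\eps),
\]
with $\mathcal{N}$ at least quadratic in $\eps$. I construct $\eps$ by a fixed-point iteration on $\{t\ge 1\}$ in a scale of weighted Banach spaces encoding both pointwise and energy-type decay, separately inside and outside the light cone; the weights are calibrated to the $t$-decay of $e_{\mathrm{app}}$ and to the structure of the Hardy-type potential $f'(u_{\mathrm{app}})/r^2$. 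Persistence of higher Sobolev regularity then yields the $C^\infty$ solution. The main obstacle is precisely that $u_{\mathrm{app}}$ has \emph{infinite} critical norm, so no standard small-data theory applies; the rescue is that this divergence originates only in the slow decay at spatial infinity, which is tame under weighted norms and further controlled by finite speed of propagation, while the regularization ensures that the scaling-noninvariant error term is genuinely small in a time-integrable sense. Stability of the resulting solution under small, smooth and sufficiently decaying perturbations of the data is then an immediate by-product of the contraction mapping set up in this step.
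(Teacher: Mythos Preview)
Your outline captures the broad strategy of the paper, but two concrete steps fail as written.

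\textbf{The regularization does not break scaling.} You cut off ``in a thin annulus $|a-1|\le\delta$'', i.e.\ with a function of $a=r/t$ alone. Then $u_{\mathrm{app}}(t,r)=U_{\mathrm{app}}(r/t)$ is still exactly self-similar, and the error $e_{\mathrm{app}}$ is of the form $t^{-2}E(r/t)$ with $E$ supported in $|a-1|\le\delta$. Its $L^2(\R^3)$ norm at time $t$ scales like $t^{-1/2}$, which is not integrable in time; no ``genuine time decay'' is gained, contrary to what you assert. The paper instead uses a cutoff $\chi(t-r)$ depending on the \emph{non-self-similar} variable $t-r$, so the error is supported in a strip of \emph{fixed} width $C\le|t-r|\le 2C$. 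A direct computation then gives a pointwise bound $O(t^{-3})$ and an $L^2(\R^3)$ bound $O(t^{-2})$, which is what drives the perturbative step. This distinction is the whole point of the regularization and cannot be finessed.

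\textbf{Largeness does not come from scaling.} You propose to achieve $\|f\|_{L^\infty(r\ge 1)}>M$ by replacing $U_0$ by $U_0(\lambda\,\cdot)$. But self-similar solutions of \eqref{eq:scalar wave equation} are invariant under this rescaling (the equation is $u\mapsto u(\lambda t,\lambda r)$, not $\lambda^\alpha u$), so $\|U_0(\lambda\,\cdot)\|_{L^\infty}=\|U_0\|_{L^\infty}$ and nothing is gained. In the paper the largeness is obtained by a genuinely different mechanism: one exploits the two-parameter freedom in the exterior solution near $a=1^+$ and chooses the coefficient $\tilde d_1$ of the regular fundamental solution arbitrarily large (Lemma~\ref{l:lemma5}), which forces $|Q(a_*)|\gtrsim \tilde d_1^{1/2}$ at some $a_*>1$. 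This in turn requires the initial time $T$ in the perturbative step to be taken large depending on $\tilde d_1$, so your normalization ``$t_0=1$ by scaling'' is also not available. Finally, the perturbative argument in the paper is carried out for $v=\eps/r$ as a radial wave on $\R^{5+1}$, combining Strichartz norms at the critical level with an energy bootstrap in $(t/T)^\gamma\dot H^1\times(t/T)^\gamma L^2$ and a high--low frequency splitting to handle the borderline potential term; your ``weighted Banach spaces'' sketch would need to reproduce at least this much structure.
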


Moreover, the solutions of Theorems \ref{th:main} exhibit a precise asymptotic description.
Let us remark that the possibility of achieving  
$\| f\|_{L^\infty(r\geq1)}> M$ leads to solutions which are not even small in Besov spaces (such as $\dot{B}^{\frac{3}{2}}_{2,\infty} \times \dot{B}^{\frac{1}{2}}_{2,\infty}$)
as it is instead the case for the global solutions constructed in \cite{ge}.
\smallskip

The paper is organized as follows.
In Section \ref{sec:self-similar} we construct smooth self-similar solutions of the form
$$Q_0(t,r)= Q\left(\frac{r}{t}\right)$$
for $r>t$ or $r<t$ by a reduction to a nonlinear Sturm-Liouville problem, see \eqref{eq:self-similar eq}. We solve this
ODE by use of fixed point arguments using smallness in $L^\infty$.
As we will see, starting with small data at $a= 0$, $Q_0$ exhibits a singularity of
the form $|a-1| \log |a-1|$ near $a=1$ which precisely fails logarithmically to belong to the
scaling critical Sobolev space $\dot{H}^{\frac{3}{2}}(\R^3)$. 
In the second part of  Section \ref{sec:self-similar}
we glue together the two solutions residing
inside and outside the light-cone, respectively, at $r=t$ to form a continuous
function $Q_0(r,t)$ which decays at $a=\infty$ as $a^{-1}$
(and thus fails to lie in $\dot{H}^{\frac{3}{2}}$ at $\infty$).

Section \ref{sec:large} shows that one of the
parameters determining the self--similar solution of Section \ref{sec:self-similar} near the singularity at $a=1$ can be chosen
arbitrarily large, leading to rapid growth and oscillation of the solution on the set
$a>1$ near $a=1$; the nice behavior of non--linear terms allows to extend these
solutions all the way to $a=\infty$, where they again decay asymptotically like $a^{-1}$. Thanks to this extension
we can achieve the large solutions announced in Theorem \ref{th:main}.

Section \ref{sec:regularization} is devoted to the regularization of $Q_0$ near the light cone: we multiply the singular components of $Q_0$
by a smooth cut-off function localized at $a=1$ which leaves untouched the solution far from the light cone. The smooth function thus obtained
is no longer an exact solution of \eqref{eq:scalar wave equation}, but in the next section
 we show that it can be perturbed in a smooth way to obtain exact solutions. 
The concluding argument is the content of Section \ref{sec:completion}: it relies on the energy
supercritical nature of the problem and imitates the ideas already present in \cite{krsc}.
The methods of Sections \ref{sec:regularization} and \ref{sec:completion} apply to the case of small self--similar solutions constructed in Section \ref{sec:self-similar} as well as 
to large--size self--similar solutions as constructed in Section \ref{sec:large}.

\section{Self--similar solutions} \label{sec:self-similar}
A \textit{self--similar} solution of the Cauchy problem for a wave map $u$ is a solution that depends only on the ratio $r/t$.
Such solutions are thus constant along rays emanating from the origin in space--time, and consequently experience a gradient singularity
at the origin (if non--trivial).
The existence of non--trivial self-similar solutions was proven in the case of $M= \R^{3+1}$ (which we are concerned with) first by Shatah in \cite{sh} where $N=\Sf^3$, then it was extended to more general rotationally symmetric, non--convex targets by Shatah and Tahvildar-Zadeh in \cite{sta}
and also extended to higher dimensions for $M$ by the same authors in a joined work with Cazenave \cite{casta}; a particular self--similar solution exhibiting 
blow--up was explicitly given in \cite{tusp},

In the corotational setting, equation \eqref{eq:scalar wave equation} admits a self--similar solution 
\[
u(t, r) = Q\left(\frac{r}{t}\right)
\]
if $Q$ satisfies the following ordinary differential equation in $a:= r/t$
\begin{equation}\label{eq:self-similar eq}
 (1-a^2)Q''(a) + \left( \frac{2}{a} - 2a \right)Q'(a)-\frac{1}{a^2}f(Q(a)) = 0,
\end{equation}
with $f$ defined as above to be $f(Q) = 2 \sin Q  \cos Q$. 
Indeed we can compute
\begin{align*}
u_{t} &= -\frac{r}{t^2}Q'\left(\frac{r}{t}\right),\quad u_{tt} = \frac{2r}{t^3}Q'\left(\frac{r}{t}\right) + \frac{r^2}{t^4}Q''\left(\frac{r}{t}\right) \\
u_{r} &= \frac{1}{t}Q'\left(\frac{r}{t}\right),\quad u_{rr} = \frac{1}{t^2}Q''\left(\frac{r}{t}\right)
\end{align*}
whence equation \eqref{eq:self-similar eq}.
The natural initial conditions at $a=0$ are
\begin{equation}\label{eq:initial conditions}
 Q(0)=0, \quad Q'(0)=d_0.
\end{equation}

We immediately observe that the possible singularities for a solution of \eqref{eq:self-similar eq} on $[0,1]$
can occur only at $a=0$ and $a=1$.
In the following we will show existence of exact solutions to \eqref{eq:self-similar eq} by carefully analysing the behavior
near $a=0$ and $a=1$. The strategies involved here are inspired by the analogous construction in \cite{krsc}.

\begin{lem}\label{l:lemma1}
There exists $\varepsilon>0$ small enough such that, for any $0\leq d_0 \leq \varepsilon$, the equation \eqref{eq:self-similar eq}
admits a unique smooth solution on $[0, 1/2]$ with initial conditions \eqref{eq:initial conditions}.
Furthermore
\begin{align}\label{eq:solution in 1/2}
 Q(1/2)&= d_0 \varphi_0 (1/2) + O(d_0^3) \notag \\
  Q'(1/2)&= d_0 \varphi_0'(1/2) + O(d_0^3),
\end{align}
where 
$$\varphi_0 (a)=\frac{3}{4}\left[\frac{2}{a}+ \frac{1- a^2}{a^2}\log \left(\frac{1-a}{1+a}\right)\right].$$
\end{lem}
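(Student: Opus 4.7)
\emph{Proof plan.} The strategy is to treat \eqref{eq:self-similar eq} as a perturbation of its linearization at $Q \equiv 0$ and solve it by a Banach fixed-point argument. Writing $f(Q) = 2Q + h(Q)$ with $h(Q) := \sin(2Q) - 2Q = -\tfrac{4}{3}Q^3 + O(Q^5)$, the equation recasts as $L_0 Q = h(Q)/a^2$, where
\begin{equation*}
L_0 \varphi := (1-a^2)\varphi'' + \Big(\tfrac{2}{a}-2a\Big)\varphi' - \tfrac{2}{a^2}\varphi.
\end{equation*}
Near $a=0$, $L_0$ has indicial roots $r = 1$ and $r = -2$ (roots of $r^2 + r - 2 = 0$); a direct Taylor expansion of $\log\tfrac{1-a}{1+a}$ yields $\varphi_0(a) = a + \tfrac{1}{5}a^3 + \cdots$, confirming that the proposed $\varphi_0$ is the regular-at-zero solution of $L_0 \varphi_0 = 0$ with $\varphi_0(0) = 0, \varphi_0'(0) = 1$. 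I then construct the linearly independent singular solution $\psi_0(a) \sim a^{-2}$ by reduction of order; its Wronskian with $\varphi_0$ satisfies $W' = -\tfrac{2}{a}W$, hence $W(a) = C/a^2$. The resulting Duhamel kernel
\[
G(a,s) := \frac{\psi_0(a)\varphi_0(s) - \varphi_0(a)\psi_0(s)}{(1-s^2)\,W(s)}
\]
is in fact bounded on $\{0 \leq s \leq a \leq 1/2\}$, the singularities of $\psi_0(s)$ and $W(s)^{-1}$ precisely canceling.

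With this preparation, the equation is equivalent to the fixed-point problem $Q = T(Q)$,
\[
T(Q)(a) := d_0\,\varphi_0(a) + \int_0^a G(a,s)\,\frac{h(Q(s))}{s^2}\,ds.
\]
I would work in the Banach space
\[
X := \Big\{\, Q \in C^0([0,1/2]) : Q(0) = 0,\ \|Q\|_X := \sup_{0 < a \leq 1/2} \tfrac{|Q(a)|}{a} < \infty \,\Big\},
\]
in which $\|d_0\varphi_0\|_X \lesssim d_0$, while the cubic bound $|h(Q(s))/s^2| \lesssim \|Q\|_X^3\, s$ produces $\|T(Q) - d_0\varphi_0\|_X \lesssim \|Q\|_X^3$ and the Lipschitz estimate $\|T(Q_1) - T(Q_2)\|_X \lesssim (\|Q_1\|_X + \|Q_2\|_X)^2 \|Q_1 - Q_2\|_X$. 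For $d_0 \leq \varepsilon$ sufficiently small, $T$ is a contraction on the closed ball of radius $2 d_0 \|\varphi_0\|_X$, and Banach's theorem yields the unique solution $Q$. Evaluating $Q = T(Q)$ at $a = 1/2$ and differentiating it in $a$ (noting $G(a,a) = 0$), combined with boundedness of $G(1/2, \cdot)$ and $\partial_a G(1/2, \cdot)$ on $[0,1/2]$ together with $|h(Q(s))| \lesssim d_0^3 s^3$, immediately produces the asymptotics \eqref{eq:solution in 1/2}.

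It remains to upgrade $Q$ from merely continuous to $C^\infty$-smooth. On $(0, 1/2]$ this is automatic from standard ODE theory since the coefficients of \eqref{eq:self-similar eq} are real-analytic there. At the regular singular point $a = 0$ I would invoke Frobenius theory: corresponding to the larger indicial root $r = 1$ there is a convergent power series $Q(a) = d_0 a + c_3(d_0) a^3 + c_5(d_0) a^5 + \cdots$, containing only odd powers by the symmetry $(a,Q) \mapsto (-a,-Q)$ of the equation (since $f$ is odd); this parity sidesteps any logarithmic resonance associated with the smaller root $r = -2$ (which would sit at the odd power $a^{-2}$, incompatible with a regular series). By uniqueness this series agrees with the fixed-point solution, giving $Q \in C^\infty([0,1/2])$. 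The main technical point I expect to be the real obstacle is the estimate on $G$ near $s = 0$: verifying that the singular factors $\psi_0(s)$ and $W(s)^{-1}$ combine into a bounded kernel, so that the cubic vanishing of $h(Q)$ at the origin suffices to make $T$ a well-defined contraction on the weighted space $X$. Once this compensation is established, the remaining arguments follow a standard Picard iteration template.
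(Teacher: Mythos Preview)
Your proposal is correct and follows essentially the same approach as the paper: both linearize around $Q\equiv 0$, identify the same fundamental system (your $\varphi_0,\psi_0$ are the paper's $\tfrac{3}{4}\varphi_2,\varphi_1$), recast the equation as a Duhamel integral via the Green's function, and close by a contraction argument exploiting the cubic vanishing of $\sin(2Q)-2Q$ against the $s^{-2}$ weight. The only cosmetic differences are that the paper runs the contraction in a $C^2$-based space $X_{d_0}=d_0\varphi_0+\{h:\|h\|_{C^2}\le d_0^2,\ |h(a)|\le d_0^2 a^2\}$ rather than your $C^0$ weighted space, and defers smoothness at $a=0$ to \cite{krsc} where you invoke Frobenius---neither of these affects the substance of the argument.
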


\begin{proof}
We consider the associated linear equation
\begin{equation}\label{eq:linear1}
  Q''(a) + \frac{2}{a} Q'(a)-\frac{2}{a^2(1-a^2)}Q(a) = 0,
\end{equation}
with fundamental system
\begin{equation}\label{eq:fundamental system}
\varphi_1(a)= \frac{1-a^2}{a^2}, \qquad \varphi_2(a)= \frac{2}{a}+ \frac{1- a^2}{a^2}\log \left(\frac{1-a}{1+a}\right).
\end{equation}
We define the Green function $G(a,b)$ for $0<b<a<1$
\begin{equation}\label{eq:Green function}
 G(a,b):= \frac{\varphi_1(a)\varphi_2(b)- \varphi_1(b)\varphi_2(a)}{W(b)}
\end{equation}
where the Wronskian $W$ is defined as
\begin{equation}\label{eq:Wronskian}
 W(b):=\varphi_1(b) \varphi_2'(b)- \varphi_1'(b) \varphi_2(b).
\end{equation}
By explicit computation we obtain

\begin{align}\label{eq:explicit Green function}
 W(b)&= \frac{4}{b^2}, \notag \\
 G(a,b)&=\frac{1-a^2}{4a^2}\left( 2b +(1-b^2) \log \left( \frac{1-b}{1+b}\right)\right)- \frac{1-b^2}{4} \left(\frac{2}{a} +\frac{1-a^2}{a^2} \log \left( \frac{1-a}{1+a} \right) \right). 
\end{align}

If we consider the inhomogeneous equation
\begin{equation}\label{eq:inhomogeneous eq}
  \psi''(a) + \frac{2}{a} \psi'(a)-\frac{2}{a^2(1-a^2)}\psi(a) = H(a)
\end{equation}
with initial conditions
\begin{equation}\label{eq:inhomogeneous initial conditions}
 \psi(0)=0, \quad \psi'(0)=0,
\end{equation}
then a solution in integral form can be obtained via the Green function as
$$ \psi(a)= \int_0^a G(a,b) H(b) db.$$
In view of this, we can make an ansatz for the self--similar solution of the non--linear equation \eqref{eq:self-similar eq} by
choosing $H$ as a non--linear function of $Q$ itself, i.e. by thinking of $H$
as the difference between the non--linear equation \eqref{eq:self-similar eq} and the linearized one \eqref{eq:linear1}:
more precisely we define $H$ as the following function of $Q(a)$ and $a$ 
\begin{equation}\label{F}
 H(Q(a)):= \frac{\sin (2Q(a)) - 2Q(a)}{a^2(1-a^2)}
\end{equation}
and we seek for a solution $Q$ of \eqref{eq:self-similar eq} on $0\leq a\leq 1/2$ with initial conditions \eqref{eq:initial conditions} of the form
\begin{equation}\label{eq:ansatz0}
 Q(a)= \frac{3}{4}d_0 \varphi_2(a) +\int_0^a G(a,b) H(Q(b))) db.
\end{equation}
We remark that $\varphi_2$ is analytic on $(-1,1)$ with expansion 
$$\varphi_2(a)= \frac{4}{3}a +\frac{4}{15} a^3+O(a^5),$$
whence $\varphi(0)=0$ and $\varphi'(0)=4/3$. For the sake of brevity we introduce $\varphi_0(a):=\frac{3}{4}\varphi_2(a)$ so
that we can simplify the ansatz \eqref{eq:ansatz0} to
\begin{equation}\label{eq:ansatz}
 Q(a)= d_0 \varphi_0(a) +\int_0^a G(a,b) H(Q(b)) db,
\end{equation}
so that the smallness parameter $d_0$ is highlighted.
In order to solve \eqref{eq:self-similar eq} on $[0,1/2]$ we set up a contraction argument by use of the ansatz \eqref{eq:ansatz}.
We define the map $T$ as
\begin{equation}\label{eq:contraction map}
 (Tf)(a):=d_0 \varphi_0(a) +\int_0^a G(a,b) H(f(b)) db,
\end{equation}
where $f$ belongs to some suitable functional normed space $X_{d_0}$ to be chosen such that $T:X_{d_0}\rightarrow X_{d_0}$ is a contraction.
According to our ansatz, we assume that $0\leq d_0\leq \varepsilon$ and we introduce the space $X_{d_0}$ 
$$X_{d_0}:= d_0 \varphi_0+ \left\{h(a)\; | \;h\in C^2([0,1/2]),\; \|h\|_{C^2[0,1/2]}\leq {d_0}^2,\; |h(a)|\leq {d_0}^2 a^2 \right\}.$$
The set $\left\{h(a)\; | \;h\in C^2([0,1/2]),\; \|h\|_{C^2[0,1/2]}\leq {d_0}^2,\; |h(a)|\leq {d_0}^2 a^2 \right\}$ defines a closed convex subset of
a linear space which we equip with the following norm
$$ \|h\|_{X_{d_0}}:= \|h\|_{C^2[0,1/2]}+ \sup_{0<a<\frac{1}{2}} \frac{|h(a)|}{a^2}.$$
Our claim can be precisely formulated: \textit{there exists $\varepsilon>0$ small enough such that for any $0\leq d_0\leq \varepsilon$ the equation
\eqref{eq:self-similar eq} has a unique solution in $X_{d_0}$.}

We now proceed to the proof of the claim.
As a first step we show that $T$ maps $X_{d_0}$ into itself.
First of all, by \eqref{eq:explicit Green function} we have the following expansion for $0<b<a$
$$G(a,b)= \frac{1-a^2}{4a^2}\left( \frac{4}{3} b^3 +O(b^5)\right)- 
\frac{1-b^2}{4} \left(\frac{2}{a} +\frac{1-a^2}{a^2} \log \left( \frac{1-a}{1+a} \right) \right).$$
Hence, if we consider $\frac{G(a,b)}{b^2(1-b^2)}$, we get for $0<b<a$
$$\frac{G(a,b)}{b^2(1-b^2)}= \frac{1-a^2}{4a^2}\left( \frac{4}{3} b +O(b^3)\right)- 
\frac{1}{4b^2} \left(\frac{2}{a} +\frac{1-a^2}{a^2} \log \left( \frac{1-a}{1+a} \right) \right).$$
While the first term in $\frac{G(a,b)}{b^2(1-b^2)}$ is clearly analytic for $0<b<a$, the second term exhibits a singularity of the type
$-1/b^2$ near $b=0$: nonetheless when dealing with $T$ we can get rid of this singularity 
by integrating it against $[\sin (2f(b)) - 2f(b)]$ for $f \in X_{d_0}$. \\

Let us now consider a function $f\in X_{d_0}$ which we can write as $f(b)= d_0 \varphi_0(b) +h_f(b)$: since $f$ satisfies by definition the initial condition $f(0)=0$
we are allowed to expand $ H(f(b))(b^2(1-b^2))$ in a neighborhood of $b=0(=f(0))$ in the following standard way
\begin{align}\label{eq:expansion of sinus}
  H(f(b))(b^2(1-b^2))&=\notag \\
  \sin (2f(b)) - 2f(b)&= -\frac{4}{3} f(b)^3+\frac{4}{15} f(b)^5 + O(f(b)^7)
\end{align}
Moreover, for $0\leq b\leq 1/2$, any $f\in X_{d_0}$ satisfies $|f(b)| \leq c d_0 b + d_0^2 b^2\leq M d_0 b $ for an absolute constant $M>0$
and if $d_0$ is small enough. This bound on $f(b)$ 
together with \eqref{eq:expansion of sinus} gives the key estimate for killing the singularity in the second term of $\frac{G(a,b)}{b^2(1-b^2)}$.
Indeed, thanks to all previous considerations, if $f\in X_{d_0}$ we can bound
$$h(a):= \int_0^a G(a,b) H(f(b)) db= \int_0^a \frac{G(a,b)}{b^2(1-b^2)} [\sin (2f(b)) - 2f(b)] db$$
for  $0\leq a\leq 1/2$ as follows
\begin{align} \label{eq:estimates for h}
 &|h(a)|\leq M^3 d_0^3a^2\ll d_0^2 a^2, \notag \\
 &|h'(a)|\leq M^3 d_0^3 a\ll d_0^2 a, \notag \\
 &|h''(a)| \leq M^3 d_0^3 \ll d_0^2,
\end{align}
provided $d_0$ is small enough, i.e. provided $\varepsilon>0$ is chosen sufficiently small.
This proves that $T:X_{d_0} \rightarrow X_{d_0}$.\\
To prove the claim it remains to show that $T$ is indeed a contraction on $X_{d_0}$ with respect to its norm.
We consider $f,g\in X_{d_0}$ and using similar arguments as above we can estimate
\begin{equation*}
 {\|Tf- Tg\|}_{X_{d_0}}\leq C {d_0}^2 {\|f- g\|}_{X_{d_0}}
\end{equation*}
which implies that $T$ is a contraction for $d_0$ sufficiently small, whence the claim.\\
In order to conclude for the regularity of the solution we can argue similarly as Krieger and Schlag did in \cite[Lemma 2.1]{krsc}.
\end{proof}

The next step consists in solving equation \eqref{eq:self-similar eq} backwards starting from $a=1$

\begin{lem}\label{l:lemma2}
There exists $\varepsilon>0$ small enough such that, for any $d_1, d_2 \in (-\varepsilon, \varepsilon)$, the equation \eqref{eq:self-similar eq}
admits a unique solution on $[1/2,1)$ of the form
\begin{equation}\label{eq:solution backwards from 1}
 Q(a)= d_1 \varphi_1(a) + d_2 \varphi_2(a) + (d_3- d_2) \frac{2}{a}+ Q_1(a)
\end{equation}
where $d_3$ is given by $\sin (4d_3)= 4d_2$ and with
\begin{align}\label{eq:asymptotics}
 \varphi_1(a)&=\frac{1-a^2}{a^2}= O(1-a), \notag \\
 \varphi_2(a)&= \frac{2}{a}+ \frac{1- a^2}{a^2}\log \left(\frac{1-a}{1+a}\right)=2\Big(1+O(1-a)\Big)+2 O\Big((1-a) \log (1-a)\Big),\notag \\
 Q_1(a)&=\Big(|d_1|^3+|d_2|^3\Big) O\Big((1-a)^2 \log^2 (1-a) \Big)
\end{align}
where the expansions hold for $a\in [1/2,1]$.
Moreover for $a=1/2$ we have
\begin{align}\label{eq:solution2 in 1/2}
Q(1/2)&= d_1 \varphi_1(1/2) + d_2 \varphi_2(1/2) +4(d_3- d_2) +O \Big(|d_1|^3+|d_2|^3\Big)\notag \\
Q'(1/2)&=d_1 \varphi_1'(1/2) + d_2 \varphi_2'(1/2) -8(d_3- d_2) +O \Big(|d_1|^3+|d_2|^3\Big). 
\end{align}
\end{lem}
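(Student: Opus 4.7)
The plan is to mirror the construction of Lemma \ref{l:lemma1}, with the roles of the two endpoints $a=0$ and $a=1$ exchanged, and with one additional ingredient needed to handle the compatibility condition at the singular endpoint $a=1$.

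First, for $|d_2|\leq\varepsilon$ small, define $d_3$ as the unique small solution of $\sin(4d_3)=4d_2$ via the implicit function theorem; expanding $\sin$ gives $d_3=d_2+\tfrac{8}{3}d_2^3+O(d_2^5)$, so in particular $d_3-d_2=O(|d_2|^3)$. Substituting the ansatz into $LQ=H(Q)$, where $L$ and $H$ are as in Lemma \ref{l:lemma1}, and using $L\varphi_1=L\varphi_2=0$ together with the direct calculation $L(2/a)=-4/[a^3(1-a^2)]$, one obtains the inhomogeneous equation
\begin{equation*}
LQ_1 = H(Q) + \frac{4(d_3-d_2)}{a^3(1-a^2)}
\end{equation*}
for the remainder $Q_1$.

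The crucial observation is that the algebraic identity $\sin(4d_3)-4d_3=-4(d_3-d_2)$, which is just a rewriting of $\sin(4d_3)=4d_2$, forces the would-be $(1-a)^{-1}$ singularity of the right-hand side at $a=1$ to cancel exactly: the zero-th order (in $\tilde Q:=Q-2d_3$) contribution to the forcing simplifies to $4(d_3-d_2)/[a^3(1+a)]$, a bounded function of size $O(|d_2|^3)$. Combined with the Taylor expansion $\sin(2Q)-2Q=-\tfrac{4}{3}Q^3+O(Q^5)$ and the asymptotic behavior $\tilde Q_{\mathrm{lin}}(a)=2d_2(1-a)\log(1-a)+O\bigl((|d_1|+|d_2|)(1-a)\bigr)$ near $a=1$ (read off from the explicit formulas for $\varphi_1$, $\varphi_2$, and $2/a$), this yields a total forcing on $Q_1$ bounded by $(|d_1|^3+|d_2|^3)\cdot O(1+\log^2(1-a))$.

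I would then represent $Q_1$ via the Green's function of Lemma \ref{l:lemma1}, now integrated from $a$ up to $1$, and set up a Banach contraction in the space of $C^2([1/2,1))$ functions $f$ equipped with the weighted norm
\begin{equation*}
\|f\|:=\sup_{a\in[1/2,1)}\frac{|f(a)|}{(1-a)^2\log^2(1-a)}+\|f\|_{C^2([1/2,3/4])},
\end{equation*}
augmented by analogous weighted bounds on $f'$ and $f''$ as needed. The key estimate is $\int_a^1|G(a,b)|(1+\log^2(1-b))\,db=O\bigl((1-a)^2\log^2(1-a)\bigr)$, which follows from the expansion $G(a,b)=(b-a)+(1-a)(1-b)\log\frac{1-b}{1-a}+O\bigl((1-a)(1-b)\bigr)$ for $a,b$ both close to $1$. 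These estimates show that the integral operator maps the ball of radius $C(|d_1|^3+|d_2|^3)$ into itself and is a contraction for $\varepsilon$ small; the unique fixed point yields $Q_1$ with the asymptotics in \eqref{eq:asymptotics}, and the values at $a=1/2$ in \eqref{eq:solution2 in 1/2} follow from direct evaluation of the ansatz together with the cubic bound on $Q_1(1/2)$. The main technical obstacle is precisely this cancellation at $a=1$: without the condition $\sin(4d_3)=4d_2$ the $(1-a)^{-1}$ singularity of the forcing would prevent the integral representation from converging, and even with the cancellation one must track the sub-leading logarithmic terms in $\tilde Q_{\mathrm{lin}}$ carefully to obtain the sharp $(1-a)^2\log^2(1-a)$ decay.
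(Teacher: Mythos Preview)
Your proposal is correct and follows essentially the same approach as the paper: both set up the fixed-point problem for $Q_1$ via the Green function integrated from $a$ to $1$, use the compatibility condition $\sin(4d_3)=4d_2$ to kill the $(1-a)^{-1}$ singularity of the forcing, and then run a contraction in a space weighted by $(1-a)^2\log^2(1-a)$. Your write-up is in fact somewhat more explicit than the paper's sketch---you compute $L(2/a)$ and the near-diagonal expansion of $G(a,b)$ directly---but the underlying mechanism is identical.
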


\begin{proof}
 Similarly as we have done in the proof of Lemma \ref{l:lemma1}, we argue here by contraction arguments.
 The ansatz for $Q$ solution of \eqref{eq:self-similar eq} is given in integral form as
\begin{equation}\label{eq:ansatz2}
 Q(a)= Q_0(a)+ (d_3- d_2) \frac{2}{a}- \int_a^1 G(a,b) \frac{\sin(2Q(b)) - 2Q_0(b)- 2Q_1(b)}{b^2(1-b^2)} db
\end{equation}
with $Q_0$ defined as
$$Q_0(a):=d_1 \varphi_1(a) + d_2 \varphi_2(a)$$
and where $G$ is the Green function defined in \eqref{eq:explicit Green function}.
Assuming $Q$ has the form \eqref{eq:solution backwards from 1}, then the integral equation \eqref{eq:ansatz2} can be viewed as an equation for $Q_1$ that can be solved by contraction.
The asymptotics \eqref{eq:asymptotics} are a consequence of the behavior of $\frac{G(a,b)}{b^2(1-b^2)}$ for $1/2\leq a<1$ and $a< b<1$.
Indeed we can write $\frac{G(a,b)}{b^2(1-b^2)}$ as the sum of the following two terms
\begin{align} \label{eq:G1 and G2}
 G_1(a,b):&= \frac{1-a^2}{4a^2}\left( \frac{2}{b(1-b^2)} +\frac{1}{b^2} \log \left( \frac{1-b}{1+b}\right)\right)\notag\\
 G_2(a,b):&=- \frac{1}{4b^2} \left(\frac{2}{a} +\frac{1-a^2}{a^2} \log \left( \frac{1-a}{1+a} \right) \right)
\end{align}
so we clearly see that for $1/2\leq a<1$ and $a< b<1$
\begin{align}
 G_1(a,b)\approx (1-a)\left( \frac{1}{(1-b)} + \log \left( 1-b \right)\right)\notag\\
 G_2(a,b)\approx - \left(2 + 2(1-a) \log \left( 1-a \right) \right).
\end{align}
Now, integrating these two terms against $\Big[\sin(2Q(b)) - 2Q_0(b)- 2Q_1(b)\Big]$ which is
of order $O\Big((1-b)\log(1-b)\Big )$ and using the fact that $\sin (4d_3)= 4d_2$ allows to conclude that the integral in the ansatz \eqref{eq:ansatz2} decays like
$\Big((1-a)^2 \log^2 (1-a)\Big)$.
\end{proof}

We remark that on the interval $(1/2,1)$ we obtained a $2$-parameter family ($d_3$ indeed is determined by $d_2$) of solutions: this allows us to 
solve the non--linear connection problem at $a=1/2$, i.e. to ``glue'' smoothly at $a=1/2$ the solutions of Lemma \ref{l:lemma1} and
those of Lemma \ref{l:lemma2}.

\begin{cor}\label{c:connection at 1/2}
Given any $d_0$ small enough, the ordinary differential equation \eqref{eq:self-similar eq} admits a unique $C^2$ self--similar solution $Q(a)$
on the interval $[0,1)$ with initial conditions \eqref{eq:initial conditions}.
In a left neighborhood of $a=1$ this solution $Q$ has the form \eqref{eq:solution backwards from 1}, i.e. it behaves as follows
\begin{equation} \label{eq:left solution in 1}
 Q(a)= d_1  O(1-a) + 2 d_2  O\Big((1-a) \log (1-a)\Big) + d_3+ Q_1(a)
\end{equation}
with $Q_1$ as in \eqref{eq:asymptotics} ($Q_1(1)=0$).
\end{cor}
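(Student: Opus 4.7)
The plan is to produce $Q$ on $[0,1)$ by concatenating the solution $Q_{\text{L}}$ on $[0,1/2]$ from Lemma \ref{l:lemma1} with a solution $Q_{\text{R}}$ on $[1/2,1)$ from Lemma \ref{l:lemma2}, choosing the parameters $(d_1, d_2)$ of the latter so that $Q_{\text{L}}$ and $Q_{\text{R}}$ agree in value and first derivative at $a = 1/2$. Given $d_0$ small, Lemma \ref{l:lemma1} fixes $(Q_{\text{L}}(1/2), Q_{\text{L}}'(1/2))$ via \eqref{eq:solution in 1/2}, while \eqref{eq:solution2 in 1/2} describes $(Q_{\text{R}}(1/2), Q_{\text{R}}'(1/2))$ as a smooth function of $(d_1, d_2)$; recall that $d_3$ is itself determined by $d_2$ through $\sin(4 d_3) = 4 d_2$, so in particular $d_3 - d_2 = O(|d_2|^3)$ and contributes only to the cubic remainder.

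The central step is to solve this $2 \times 2$ matching system for $(d_1, d_2)$ as a function of $d_0$. The linearization at the origin has matrix
\[
M = \begin{pmatrix} \varphi_1(1/2) & \varphi_2(1/2) \\ \varphi_1'(1/2) & \varphi_2'(1/2) \end{pmatrix},
\]
whose determinant is precisely the Wronskian $W(1/2) = 16 \neq 0$ computed in \eqref{eq:explicit Green function}. Since the remainders in \eqref{eq:solution in 1/2} and \eqref{eq:solution2 in 1/2} are cubic in the corresponding small parameters, I would invoke the implicit function theorem (or, equivalently, a direct contraction on a ball of radius $\sim d_0$ in the $(d_1,d_2)$-plane) to obtain a unique pair $(d_1(d_0), d_2(d_0))$ with $|d_1| + |d_2| \lesssim d_0$, provided $\varepsilon$ is small enough. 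This invertibility of the linearized matching map is the main (and essentially only) substantive point; everything else is bookkeeping.

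With these matched parameters, defining $Q$ to equal $Q_{\text{L}}$ on $[0,1/2]$ and $Q_{\text{R}}$ on $[1/2,1)$ gives a function continuous in value and derivative across $a = 1/2$. Because the leading coefficient $1-a^2$ of \eqref{eq:self-similar eq} is nonzero there, the ODE itself expresses $Q''(1/2)$ algebraically in terms of $Q(1/2)$ and $Q'(1/2)$, so $Q$ is automatically $C^2$ (and by iterated differentiation even $C^\infty$) at the gluing point. Uniqueness of the resulting solution on $[0,1)$ follows from the uniqueness statements of Lemmas \ref{l:lemma1}--\ref{l:lemma2} together with the unique solvability of the connection system. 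Finally, the asymptotic form \eqref{eq:left solution in 1} near $a = 1$ is then merely a rereading of \eqref{eq:solution backwards from 1} and \eqref{eq:asymptotics}: substituting $\varphi_1(a) = O(1-a)$, $\varphi_2(a) = 2 + O\bigl((1-a)\log(1-a)\bigr)$, and $2/a = 2 + O(1-a)$, the constant contributions collect into the $d_3$ term, the logarithmic and linear pieces appear as the $d_1$ and $d_2$ corrections, and $Q_1(a)$ absorbs the $O((1-a)^2 \log^2(1-a))$ remainder with $Q_1(1) = 0$.
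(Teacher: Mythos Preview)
Your argument is correct and follows essentially the same route as the paper: glue the solutions of Lemmas~\ref{l:lemma1} and~\ref{l:lemma2} at $a=1/2$ by solving the matching system for $(d_1,d_2)$ via the inverse/implicit function theorem, using that the Jacobian at the origin is the Wronskian $W(1/2)\neq 0$. Your additional remarks on $C^2$ regularity at the gluing point and on reading off the asymptotics~\eqref{eq:left solution in 1} are accurate elaborations of points the paper leaves implicit.
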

\begin{proof}
 In order to prove this corollary of Lemmas \ref{l:lemma1} and \ref{l:lemma2} we simply apply the inverse function theorem.
 More precisely, given any $d_0$ small enough, by Lemma \ref{l:lemma2} we can associate to it the unique solution $Q$.
 Now, we aim at finding $d_1$ and $d_2$ small such that \eqref{eq:solution2 in 1/2} match
 the values of $Q$ and $Q'$ at $a=1/2$, i.e. \eqref{eq:solution in 1/2}.
 Since the Jacobian determinant of \eqref{eq:solution2 in 1/2} viewed as functions of $d_1, d_2$ in exactly the Wronskian of $\varphi_1$, $\varphi_2$,
 we easily see that its value at $(d_1, d_2)=0$ is $1$. It being non-zero, we can invoke the inverse function theorem and find the desired $d_1, d_2$ small.
 \end{proof}
Let us finally note that the obtained solution just fails logarithmically to be in $\dot{H}^{\frac{3}{2}}$.

The next step consists in solving the self--similar ODE \eqref{eq:self-similar eq} in the exterior light cone, i.e. for
$a=r/t>1$. Similarly as for the interior light cone, we first solve the problem on the two intervals $(1, 2]$ and $[2, \infty)$ and then we will glue
the two solutions at the connection point $a=2$.

\begin{lem}\label{l:lemma3}
There exists $\varepsilon>0$ small enough such that, for any $\wt{d}_1, \wt{d}_2 \in (-\varepsilon, \varepsilon)$, the equation \eqref{eq:self-similar eq}
admits a unique solution on $(1,2]$ of the form
\begin{equation}\label{eq:solution forward from 1}
 Q(a)= \wt{d}_1 \wt{\varphi}_1(a) + \wt{d}_2 \ti{\varphi}_2(a) - (\wt{d}_3- \wt{d}_2) \frac{2}{a}+ \wt{Q}_1(a)
\end{equation}
where $\wt{d}_3$ is given by $\sin (4\wt{d}_3)= 4\wt{d}_2$ and with
\begin{align}\label{eq:asymptotics 3}
 \wt{\varphi}_1(a)&=\frac{a^2-1}{a^2}= O(a-1), \notag \\
 \wt{\varphi}_2(a)&= -\frac{2}{a}+ \frac{a^2-1}{a^2}\log \left(\frac{a-1}{a+1}\right)=2\Big(-1+O(a-1)\Big)+2 O\Big((a-1) \log (a-1)\Big),\notag \\
 \wt{Q}_1(a)&=\Big(|\wt{d}_1|^3+|\wt{d}_2|^3\Big) O\Big((a-1)^2 \log^2 (a-1) \Big)
\end{align}
where the expansions hold for $a\in (1,2]$.
Moreover for $a=2$ we have
\begin{align}\label{eq:solution3 in 2}
Q(2)&= \wt{d}_1 \wt{\varphi}_1(2) + \wt{d}_2 \wt{\varphi}_2(2) -4(\wt{d}_3- \wt{d}_2) +O \Big(|\wt{d}_1|^3+|\wt{d}_2|^3\Big)\notag \\
Q'(2)&=\wt{d}_1 \wt{\varphi}_1\,'(2) + \wt{d}_2 \wt{\varphi}_2\,'(2) +8(\wt{d}_3- \wt{d}_2) +O \Big(|\wt{d}_1|^3+|\wt{d}_2|^3\Big). 
\end{align}
\end{lem}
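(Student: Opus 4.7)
The proof mirrors that of Lemma \ref{l:lemma2}, working now in the exterior region $(1, 2]$. The linearization $Q'' + (2/a) Q' - \frac{2}{a^2(1-a^2)} Q = 0$ is the same ODE as \eqref{eq:linear1} (only with $1 - a^2 < 0$ for $a > 1$), and its fundamental system on $(1, \infty)$ is obtained by real analytic continuation of $\varphi_1, \varphi_2$ across $a = 1$: namely $\wt{\varphi}_1(a) = (a^2-1)/a^2$ and $\wt{\varphi}_2(a) = -2/a + \frac{a^2-1}{a^2}\log\frac{a-1}{a+1}$, with Wronskian $\wt{W}(b) = 4/b^2$. The associated Green function $\wt{G}(a,b) = \bigl(\wt{\varphi}_1(a)\wt{\varphi}_2(b) - \wt{\varphi}_1(b)\wt{\varphi}_2(a)\bigr)/\wt{W}(b)$, once divided by $b^2(1-b^2)$, splits exactly as in \eqref{eq:G1 and G2} into a regular piece and a piece carrying a $1/(1-b^2)$ singularity at $b = 1$, which is the only source of difficulty.

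Setting $\wt{Q}_0(a) := \wt{d}_1 \wt{\varphi}_1(a) + \wt{d}_2 \wt{\varphi}_2(a)$, I would seek a solution of the form
\[ Q(a) = \wt{Q}_0(a) - (\wt{d}_3 - \wt{d}_2)\frac{2}{a} + \int_1^a \wt{G}(a, b)\, \frac{\sin(2Q(b)) - 2\wt{Q}_0(b) - 2\wt{Q}_1(b)}{b^2(1-b^2)}\, db, \]
viewed as a fixed-point equation for the remainder $\wt{Q}_1$. The sign of the $2/a$ correction is flipped relative to \eqref{eq:ansatz2} because $\wt{\varphi}_2(1) = -2$ instead of $+2$, so that $Q(1) = -2\wt{d}_3$. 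The relation $\sin(4\wt{d}_3) = 4\wt{d}_2$ is then precisely the cancellation which ensures $\sin(2Q(1)) - 2\wt{Q}_0(1) - 2\wt{Q}_1(1) = -\sin(4\wt{d}_3) + 4\wt{d}_2 = 0$ (using $\wt{Q}_1(1) = 0$), thereby killing the $1/(1-b^2)$ singularity of the kernel. Running a contraction in the Banach space of $\wt{Q}_1 \in C^2((1, 2])$ with norm built on $\sup_{a \in (1, 2]} |\wt{Q}_1(a)|/\bigl((a-1)^2 \log^2(a-1)\bigr)$ (together with analogous derivative bounds) then produces a unique solution for $\wt{d}_1, \wt{d}_2 \in (-\varepsilon, \varepsilon)$, $\varepsilon$ small. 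Using $\sin(2Q) - 2Q = -\tfrac{4}{3}Q^3 + O(Q^5)$ together with the cancellation above yields
\[ \sin(2Q(b)) - 2\wt{Q}_0(b) - 2\wt{Q}_1(b) = O\bigl((|\wt{d}_1| + |\wt{d}_2|)^3 (b-1)\log(b-1)\bigr), \]
and integrating this against the two pieces of the kernel delivers both the asymptotic \eqref{eq:asymptotics 3} for $\wt{Q}_1$ and, upon evaluation at $a = 2$, the formulas \eqref{eq:solution3 in 2}.

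The main (and really only) obstacle is extracting the precise $O((a-1)^2 \log^2(a-1))$ decay for $\wt{Q}_1$: the regular part of $\wt{G}/(b^2(1-b^2))$ contributes an $(a-1)$-prefactor, while the singular part is tamed by the $(b-1)$ factor of the numerator up to one extra logarithm, so the combined integral yields exactly two powers of $(a-1)\log(a-1)$. Since this is the mirror image of the computation already carried out in Lemma \ref{l:lemma2}, no fundamentally new ideas are needed beyond careful bookkeeping.
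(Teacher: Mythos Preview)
Your proposal is correct and follows essentially the same approach as the paper, which in fact omits the proof of this lemma entirely with the remark that it ``can be carried out exactly along the line of the proof of Lemma~\ref{l:lemma2}''; your integral ansatz, the sign flip in the $2/a$ correction, and the cancellation mechanism via $\sin(4\wt{d}_3)=4\wt{d}_2$ all match the paper's setup (the identical ansatz appears explicitly in the proof of Lemma~\ref{l:lemma5}). The level of detail you provide is if anything more than the paper itself offers here.
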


We omit the proof of Lemma \ref{l:lemma3} since it can be carried out exactly along the line of the proof of Lemma 2 \ref{l:lemma2}

\begin{lem}\label{l:lemma4}
There exists $\varepsilon>0$ small enough such that, for any $q_1, q_2 \in (-\varepsilon, \varepsilon)$, the equation \eqref{eq:self-similar eq}
admits a unique solution on $[2,\infty)$ which , for $a\rightarrow \infty$, has the form
\begin{equation}\label{eq:solution at infinity}
 Q(a)= q_1 \wt{\varphi}_1(a) + q_2 \ti{\varphi}_2(a)+ O\left(\frac{1}{a^2}\right)
\end{equation}
with
\begin{align} \label{eq:expansion at infinity}
 \wt{\varphi}_1(a)&=\frac{a^2-1}{a^2}, \notag \\
 \wt{\varphi}_2(a)&= -\frac{2}{a}+ \frac{a^2-1}{a^2}\log \left(\frac{a-1}{a+1}\right)= -\frac{4}{a}+O\left(\frac{1}{a^2}\right), \notag \\
\end{align}
where the last expansion holds for $a\rightarrow \infty$.
Moreover for $a=2$ we have
\begin{align}\label{eq:solution4 in 2}
Q(2)&= q_1 \wt{\varphi}_1(2) + q_2 \wt{\varphi}_2(2) +O \Big(|q_1|^3+|q_2|^3\Big)\notag \\
Q'(2)&=q_1 \wt{\varphi}_1\,'(2) + q_2 \wt{\varphi}_2\,'(2) +O \Big(|q_1|^3+|q_2|^3\Big). 
\end{align}
\end{lem}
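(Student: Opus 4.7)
The plan is to mirror the strategy of Lemmas \ref{l:lemma2} and \ref{l:lemma3}, but now working \emph{backward from} $a=\infty$: the parameters $q_1,q_2$ should prescribe the asymptotic data, and the nonlinear correction is constructed on $[2,\infty)$ as the unique small solution of a Volterra integral equation integrated from $a$ to $+\infty$. The preliminary algebraic check is the expansion $\wt{\varphi}_2(a) = -4/a + O(1/a^2)$, which is immediate from $\log((a-1)/(a+1)) = -2/a - 2/(3a^3) + O(a^{-5})$.

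Since the linearized equation \eqref{eq:linear1} has $\wt{\varphi}_1,\wt{\varphi}_2$ as a fundamental system on $a>1$ with Wronskian $4/b^2$ (inherited from \eqref{eq:explicit Green function} after adjusting signs), I would look for $Q$ satisfying
\[
Q(a) \;=\; q_1\wt{\varphi}_1(a) + q_2\wt{\varphi}_2(a) \;-\; \int_a^\infty \wt{G}(a,b)\,\frac{\sin(2Q(b))-2Q(b)}{b^2(1-b^2)}\,db,
\]
where $\wt{G}(a,b):=\tfrac{b^2}{4}\bigl[\wt{\varphi}_1(a)\wt{\varphi}_2(b)-\wt{\varphi}_1(b)\wt{\varphi}_2(a)\bigr]$, and run a contraction on
\[
Y \;:=\; q_1\wt{\varphi}_1 + q_2\wt{\varphi}_2 \,+\, \Bigl\{R\in C^2([2,\infty)) : |R(a)|\le K\varepsilon^3 a^{-2},\ |R'(a)|\le K\varepsilon^3 a^{-3},\ |R''(a)|\le K\varepsilon^3 a^{-4}\Bigr\}
\]
with the natural weighted $C^2$ norm. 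The choice of integrating toward $+\infty$ (rather than from $a=2$, as in \eqref{eq:ansatz2}) is precisely what pins down $q_1\wt{\varphi}_1+q_2\wt{\varphi}_2$ as the asymptotic profile.

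The core estimate is the kernel asymptotic: inserting $\wt{\varphi}_1(a)\sim 1$, $\wt{\varphi}_2(b)\sim -4/b$ for $b\to\infty$ one gets $\wt{G}(a,b)\sim b(b-a)/a$, hence
\[
\frac{\wt{G}(a,b)}{b^2(1-b^2)} \;=\; O\!\Bigl(\frac{1}{a\,b^2}\Bigr)\qquad (2\le a<b\to\infty).
\]
Combined with the cubic bound $|\sin(2Q)-2Q|\lesssim |Q|^3\lesssim \varepsilon^3$ valid on $Y$, this delivers the $O(\varepsilon^3/a^2)$ bound that closes the ansatz; splitting $\wt{G}/(b^2(1-b^2))$ into two contributions in the spirit of \eqref{eq:G1 and G2} also handles the intermediate range $b\in(a,2a)$ cleanly. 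The Lipschitz estimate $|H(f)-H(g)|\lesssim \varepsilon^2|f-g|/(b^2(1-b^2))$ then furnishes contraction ratio $O(\varepsilon^2)$ and hence a unique fixed point $Q\in Y$. The expansions \eqref{eq:solution4 in 2} at $a=2$ then read off the fixed-point representation: the linear piece yields $q_1\wt{\varphi}_1(2)+q_2\wt{\varphi}_2(2)$, while the integral, cubic in $Q$ and extending over a range on which $|Q|\lesssim |q_1|+|q_2|$, contributes $O(|q_1|^3+|q_2|^3)$; the statement for $Q'(2)$ follows by differentiating under the integral sign and invoking the analogous kernel bounds.

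The main technical point will be verifying the kernel asymptotic with the $(b-a)/a$ structure rather than a spurious $O(b)$ term -- the necessary cancellation reflects the fact that both linear pieces grow only algebraically -- but this is routine once one inserts the expansions. Otherwise the argument is genuinely \emph{easier} than in Lemmas \ref{l:lemma2}--\ref{l:lemma3}, because there is no endpoint singularity at $a=\infty$ analogous to the $|a-1|\log|a-1|$ behaviour at $a=1$, so no delicate logarithmic bookkeeping arises.
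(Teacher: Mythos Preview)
Your proposal is correct and follows essentially the same route as the paper: the paper also writes the integral ansatz $Q(a)=q_1\wt{\varphi}_1(a)+q_2\wt{\varphi}_2(a)+\int_a^\infty \wt{G}(a,b)\,\frac{\sin(2Q(b))-2Q(b)}{b^2(1-b^2)}\,db$ and runs a contraction, recording the leading-order kernel asymptotic $\frac{\wt{G}(a,b)}{b^2(1-b^2)}\approx \frac{1}{b^3}-\frac{1}{ab^2}$ (equivalent to your $b(b-a)/a$ computation) to obtain the $O(a^{-2})$ remainder and the values \eqref{eq:solution4 in 2}. Your version is somewhat more explicit about the weighted $C^2$ space and the derivative bounds, but the argument is the same.
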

\begin{proof}
As in the proof of Lemma \ref{l:lemma1} we make use of the Green function, now defined in terms of $\wt{\varphi}_1$ and $ \wt{\varphi}_2$, i.e.
\begin{equation}\label{eq:Green function 2}
 \wt{G}(a,b):= \frac{\wt{\varphi}_1(a)\wt{\varphi}_2(b)- \wt{\varphi}_1(b)\wt{\varphi}_2(a)}{\wt{W}(b)},
\end{equation}
where the Wronskian $\wt{W}$ is now defined as
\begin{equation}\label{eq:Wronskian 2}
 \wt{W}(b):=\wt{\varphi}_1(b) \wt{\varphi}_2'(b)- \wt{\varphi}_1'(b) \wt{\varphi}_2(b).
\end{equation}
Similarly to the computation in the proof of Lemma \ref{l:lemma1} we obtain
\begin{align}\label{eq:explicit Green function 2}
 \wt{W}(b)&= \frac{4}{b^2}, \notag \\
 \wt{G}(a,b)&=\frac{a^2-1}{4a^2}\left( -2b +(b^2-1) \log \left( \frac{b-1}{b+1}\right)\right)- \frac{b^2-1}{4} \left(-\frac{2}{a} +\frac{a^2-1}{a^2} \log \left( \frac{a-1}{a+1} \right) \right). 
\end{align}
In this case, the natural ansatz for $Q$ is as follows
\begin{equation}\label{eq:ansatz3}
 Q(a)= q_1 \wt{\varphi}_1(a) + q_2 \ti{\varphi}_2(a) +\int_a^\infty \wt{G}(a,b) \frac{\sin(2Q(b))- 2Q(b)}{b^2(1-b^2)} db.
\end{equation}
In order to solve \eqref{eq:self-similar eq} on $[2,\infty)$ we can set up a contraction argument for the ansatz \eqref{eq:ansatz3}
in analogy to the proof of Lemma \ref{l:lemma1}.
Let us note that we have the following
\begin{align*}
 \frac{\wt{G}(a,b)}{b^2(1-b^2)}&= \frac{a^2-1}{4a^2}\left(\frac{2}{b(b^2-1)}-\frac{1}{b^2}\log \left(\frac{b-1}{b+1}\right)\right)\\&+
\frac{1}{4b^2}\left(-\frac{2}{a}+\frac{a^2-1}{a^2}\log\left(\frac{a-1}{a+1}\right)\right)
\end{align*}
which to leading order (and up to constants) for $a, b\rightarrow \infty$ exhibits the following decay
\begin{equation*}
 \frac{\wt{G}(a,b)}{b^2(1-b^2)}\approx \frac{1}{b^3}-\frac{1}{a b^2}.
\end{equation*}
Integrating this against $\Big[\sin(2Q(b))- 2Q(b)\Big]$ for $a<b<\infty$ gives the asymptotics \eqref{eq:solution at infinity}. The values \eqref{eq:solution4 in 2}
are simply obtained by substituting $a=2$.
\end{proof}

In view of Lemmas \ref{l:lemma3} and \ref{l:lemma4} which both generate a two-parameter family of solutions we can glue
those at $a=2$ so to produce a one smooth solution for all $a>1$. This is done in the following corollary.

\begin{cor}\label{c:connection at 2}
Given any $q_1, q_2$ small enough, the ordinary differential equation \eqref{eq:self-similar eq} admits a unique $C^2$ self--similar solution $Q(a)$
on the interval $(1,\infty)$ with the asymptotics \eqref{eq:solution at infinity}-\eqref{eq:expansion at infinity}.
In a right neighborhood of $a=1$ the solution $Q$ has the form \eqref{eq:solution forward from 1}.
\end{cor}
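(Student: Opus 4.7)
The plan is to mirror the proof of Corollary \ref{c:connection at 1/2} essentially verbatim, using $a=2$ as the connection point in place of $a=1/2$ and appealing to Lemmas \ref{l:lemma3} and \ref{l:lemma4} rather than Lemmas \ref{l:lemma1} and \ref{l:lemma2}.

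First I fix $(q_1,q_2)$ small as in the hypothesis and invoke Lemma \ref{l:lemma4} to produce the unique solution $Q_{\mathrm{ext}}(a)$ on $[2,\infty)$ with the asymptotics \eqref{eq:solution at infinity}--\eqref{eq:expansion at infinity}. The values $Q_{\mathrm{ext}}(2)$ and $Q_{\mathrm{ext}}'(2)$ are explicit functions of $(q_1,q_2)$ given by \eqref{eq:solution4 in 2}. The task then is to produce a solution $Q_{\mathrm{mid}}(a)$ from Lemma \ref{l:lemma3} on $(1,2]$ whose values and first derivative at $a=2$ match those of $Q_{\mathrm{ext}}$. Concretely, I look at the map
\begin{equation*}
\Phi : (\wt d_1, \wt d_2) \longmapsto \bigl( Q(2), Q'(2)\bigr)
\end{equation*}
defined via \eqref{eq:solution3 in 2}; since $\wt d_3$ is determined by $\wt d_2$ through $\sin(4\wt d_3)=4\wt d_2$, one has $\wt d_3 - \wt d_2 = O(\wt d_2^3)$, so the quadratic contribution of the $4(\wt d_3 - \wt d_2)$ terms is absorbed into the $O(|\wt d_1|^3+|\wt d_2|^3)$ remainder. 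Hence the linearization of $\Phi$ at the origin is
\begin{equation*}
D\Phi(0,0) = \begin{pmatrix} \wt \varphi_1(2) & \wt \varphi_2(2) \\ \wt \varphi_1\,'(2) & \wt \varphi_2\,'(2) \end{pmatrix},
\end{equation*}
whose determinant is exactly the Wronskian $\wt W(2) = 4/b^2\big|_{b=2}=1\neq 0$ from \eqref{eq:explicit Green function 2}.

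By the inverse function theorem $\Phi$ is a diffeomorphism from a neighborhood of $(0,0)$ onto a neighborhood of $\Phi(0,0)$. Since $(Q_{\mathrm{ext}}(2),Q_{\mathrm{ext}}'(2))$ lies in the latter neighborhood provided $(q_1,q_2)$ is sufficiently small, I obtain unique small $(\wt d_1,\wt d_2)$ so that the corresponding solution $Q_{\mathrm{mid}}$ from Lemma \ref{l:lemma3} matches $Q_{\mathrm{ext}}$ in value and first derivative at $a=2$. The glued function $Q$ is then $C^1$ across $a=2$; but the ODE \eqref{eq:self-similar eq} has smooth coefficients at $a=2$ (where $1-a^2\ne 0$, $a\ne 0$), so $Q''(2)$ is determined from $(Q(2),Q'(2))$ algebraically and agrees on both sides, yielding a $C^2$ solution on $(1,\infty)$. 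Uniqueness in the stated class follows from the uniqueness statements of the two lemmas together with the uniqueness of the solution to $\Phi(\wt d_1,\wt d_2) = (Q_{\mathrm{ext}}(2), Q_{\mathrm{ext}}'(2))$ given by the inverse function theorem.

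The only genuine point that requires checking is the nonvanishing of the Jacobian; everything else is bookkeeping. Since that Jacobian coincides with the nontrivial Wronskian already computed in \eqref{eq:explicit Green function 2}, there is no real obstacle beyond verifying that the $\wt d_3$-dependence contributes only at cubic order and therefore does not spoil the linearization.
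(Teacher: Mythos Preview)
Your proof is correct and follows essentially the same approach as the paper's own argument: both invoke Lemma~\ref{l:lemma4} to produce the solution on $[2,\infty)$, then apply the inverse function theorem to the map $(\wt d_1,\wt d_2)\mapsto(Q(2),Q'(2))$ from Lemma~\ref{l:lemma3}, using that the Jacobian at the origin is the nonvanishing Wronskian $\wt W(2)$. Your version simply makes explicit a few points the paper leaves implicit (the cubic order of the $\wt d_3-\wt d_2$ contribution and the $C^2$ matching via the nonsingularity of the ODE at $a=2$).
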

\begin{proof}
 The corollary can be proven once more thanks to the inverse function theorem.
 Given $q_1, q_2$ small enough, Lemma \ref{l:lemma4} provides a unique solution $Q$ for $a\in [2, \infty)$ 
 with asymptotics \eqref{eq:solution at infinity}-\eqref{eq:expansion at infinity}.
 Now, we aim at finding $\wt{d}_1$ and $\wt{d}_2$ small ($\wt{d}_3$ is given as function of $\wt{d}_2$) such that \eqref{eq:solution3 in 2} given by Lemma \ref{l:lemma3} match
 the values of $Q$ and $Q'$ at $a=2$, i.e. \eqref{eq:solution4 in 2}.
 Since the Jacobian determinant of \eqref{eq:solution3 in 2}, i.e. the Wronskian, does not vanish
 and by smallness of $q_1$ and $q_2$,
 we can apply the inverse function theorem in a neighborhood of $(\wt{d}_1, \wt{d}_2)=0$ and 
 solve the connection problem at $a=2$.
 \end{proof}
 
 Finally, to complete the construction of the self--similar solution we will connect just continuously the solution on $[0,1)$ provided
 by Corollary \ref{c:connection at 1/2} with the solution on $(1, \infty)$ provided by Corollary \ref{c:connection at 2}.
 
 \begin{cor}\label{c:matching at light cone}
 Given any small $d_0$, there exists a unique $C^2$ solution $Q(a)$ of the ordinary differential equation \eqref{eq:self-similar eq}
 on $[0,1)$ with initial conditions \eqref{eq:initial conditions}. By Corollary \ref{c:connection at 2}, this solution can be extended 
 non-uniquely to a continuous function on $a\geq 1$ which solves
 \eqref{eq:self-similar eq} on $a>1$ and behaves as
 \begin{equation} \label{eq:first behavior at infty}
 Q(a) = q_1 - 4 q_2 a^{-1}+ O(a^{-2}) 
 \end{equation}
for $a\rightarrow \infty$. The global continuous solutions on $a\geq0$ must satisfy the condition $2d_3= -2 \wt{d}_3$ (see Lemmas \ref{l:lemma2} and \ref{l:lemma3}).
Let us denote these solutions on $a>0$ by $Q_0(a)$, then we have the following global representation
\begin{equation}\label{eq:global representation}
 Q_0(a)= C_1 \frac{|a^2-1|}{a^2} +C_2 \frac{|a^2-1|}{a^2} \log\left(\frac{|a-1|}{a+1}\right) +C_3 Q_3(a)+Q_4(a)
\end{equation}
for $a\geq 1/2$, where $Q_3= \frac{1}{a}$ is smooth at $a=1$ and $Q_4(a)=O(|a-1|^2\log^2(|a-1|))$ consists of higher order terms.

\end{cor}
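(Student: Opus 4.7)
The plan is to simply concatenate the two partial existence results and read off the matching constraint by taking limits at $a=1$. First, for the given small $d_0$, Corollary \ref{c:connection at 1/2} produces the unique $C^2$ self--similar solution $Q$ on $[0,1)$ with initial data \eqref{eq:initial conditions}, and on a left neighborhood of $a=1$ this solution is represented by \eqref{eq:solution backwards from 1} with parameters $(d_1,d_2,d_3)$ (where $d_3$ is determined by $d_2$). Using the asymptotics \eqref{eq:asymptotics}, namely $\varphi_1(a)=O(1-a)$, $\varphi_2(a)\to 2$ as $a\to 1^-$, and $Q_1(a)=O((1-a)^2\log^2(1-a))$, I would take the limit inside \eqref{eq:solution backwards from 1} to obtain
\[
\lim_{a\to 1^-} Q(a) \;=\; 2d_2 + 2(d_3-d_2) \;=\; 2d_3.
\]

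Next, for any small $q_1,q_2$ Corollary \ref{c:connection at 2} yields a solution on $(1,\infty)$ with the prescribed asymptotic at infinity, and this solution near $a=1^+$ has the form \eqref{eq:solution forward from 1} with parameters $(\tilde{d}_1,\tilde{d}_2,\tilde{d}_3)$. The same kind of limit, using $\tilde{\varphi}_1(a)=O(a-1)$, $\tilde{\varphi}_2(a)\to -2$, and $\tilde{Q}_1(a)=O((a-1)^2\log^2(a-1))$, gives
\[
\lim_{a\to 1^+} Q(a) \;=\; -2\tilde{d}_2 - 2(\tilde{d}_3-\tilde{d}_2) \;=\; -2\tilde{d}_3.
\]
Demanding continuity across the light cone forces $2d_3=-2\tilde{d}_3$, which is precisely the stated condition. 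The asymptotic \eqref{eq:first behavior at infty} then follows immediately by inserting the expansions \eqref{eq:expansion at infinity} into \eqref{eq:solution at infinity}, since $\tilde{\varphi}_1(a)=1-a^{-2}$ contributes the constant $q_1$ plus $O(a^{-2})$ and $\tilde{\varphi}_2(a)=-4a^{-1}+O(a^{-2})$ contributes $-4q_2 a^{-1}+O(a^{-2})$.

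For the global representation \eqref{eq:global representation}, I would rewrite the expressions in \eqref{eq:solution backwards from 1} and \eqref{eq:solution forward from 1} by collecting the $\tfrac{2}{a}$ contributions into a single coefficient of $Q_3(a)=\tfrac{1}{a}$. On $[1/2,1)$, expanding $\varphi_2$ via \eqref{eq:fundamental system} and combining the $d_2\cdot\tfrac{2}{a}$ term with $(d_3-d_2)\tfrac{2}{a}$ collapses the $\tfrac{2}{a}$ pieces into $2d_3\cdot\tfrac{1}{a}$, so one identifies $C_1=d_1$, $C_2=d_2$, $C_3=2d_3$, and $Q_4=Q_1$. On $(1,\infty)$ the analogous rewriting of \eqref{eq:solution forward from 1} gives coefficient $-2\tilde{d}_3$ in front of $\tfrac{1}{a}$, which by the matching identity $2d_3=-2\tilde{d}_3$ equals $C_3$ as well; the factors $\tfrac{a^2-1}{a^2}$ and $\tfrac{a^2-1}{a^2}\log\tfrac{a-1}{a+1}$ become $\tfrac{|a^2-1|}{a^2}$ and $\tfrac{|a^2-1|}{a^2}\log\tfrac{|a-1|}{a+1}$ once one allows the piecewise coefficients $C_1,C_2$ to jump across $a=1$ (which is harmless since these basis functions vanish there), and $Q_4=\tilde{Q}_1$ retains the stated $O(|a-1|^2\log^2|a-1|)$ bound.

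The whole argument is essentially bookkeeping; the only genuine assertion is the matching identity $2d_3=-2\tilde{d}_3$, and the only mild subtlety to be careful about is the non--uniqueness of the extension: for fixed $d_0$ the number $d_3$, hence $\tilde{d}_3$, hence $\tilde{d}_2$ (via $\sin(4\tilde{d}_3)=4\tilde{d}_2$), is determined, but $\tilde{d}_1$ (equivalently the pair $(q_1,q_2)$ after applying Corollary \ref{c:connection at 2}) remains free, which is exactly the ``non--uniqueness'' asserted in the statement.
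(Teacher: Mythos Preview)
Your proof is correct and follows essentially the same approach as the paper: invoke Corollary~\ref{c:connection at 1/2} for the interior piece, Corollary~\ref{c:connection at 2} for the exterior piece, and read off the continuity constraint $2d_3=-2\wt{d}_3$ at $a=1$. In fact your version is more detailed than the paper's (which does not write out the two one--sided limits explicitly), and your remark that the coefficients $C_1,C_2$ in \eqref{eq:global representation} are a priori only piecewise constant across $a=1$ is a valid observation that the paper leaves implicit.
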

\begin{proof}
  Given any small $d_0$, Corollary \ref{c:connection at 1/2} provides us with $d_1, d_2, d_3$ and with a $C^2$ solution $Q$
  on $[0,1)$. On the other hand, thanks to Corollary \ref{c:connection at 2} any $q_1, q_2$ small enough give
  $\wt{d}_1$, $\wt{d}_2$ and $\wt{d}_3$ corresponding to a $C^2$ solution on $(1, \infty)$. The freedom of selecting $q_1$ and $q_2$ small allows us
  to make the choice in such a way that $\wt{d}_3$ satisfies $-2 \wt{d}_3=2d_3$, i.e. such that we can extend continuously $Q$ to $a>1$.
\end{proof}

\section{Large self--similar solutions} \label{sec:large}
In this section we show that it is possible to choose a big constant $\wt{d}_1$ in Lemma \ref{l:lemma3} thus allowing to construct
solutions which are arbitrarily large in the exterior light-cone. Indeed, by Corollary \ref{c:matching at light cone}, the continuity condition at
the light cone involves only $\wt{d_3}$ (and hence $\wt{d}_2$) leaving freedom of choice for $\wt{d}_1$. Hence, the goal here is to prove the existence
of a solution to the ODE \eqref{eq:self-similar eq} on $a>1$ for $\wt{d}_1$ large.
The first step consists in proving the analogue of Lemma \ref{l:lemma3} on a right neighborhood of $a=1$ for arbitrary $\wt{d_1}$.

\begin{lem}\label{l:lemma5}
There exists $\varepsilon>0$ small enough such that, for any $\wt{d}_2 \in (-\varepsilon, \varepsilon)$ and for any $\wt{d}_1\geq 1$ arbitrary,
the equation \eqref{eq:self-similar eq}
admits a unique solution $Q$ on $(1,1+\ell]$ with $\ell=c \wt{d}_1 \, ^{-\frac{1}{2}}$ for some absolute constant $c>0$ small enough. This solution $Q$ has the form
\begin{equation}\label{eq:large solution forward from 1}
 Q(a)= \wt{d}_1 \wt{\varphi}_1(a) + \wt{d}_2 \ti{\varphi}_2(a) - (\wt{d}_3- \wt{d}_2) \frac{2}{a}+ \wt{Q}_1(a)
\end{equation}
where $\wt{d}_3$ is given by $\sin (4\wt{d}_3)= 4\wt{d}_2$ (and hence $\wt{d}_3$ is also small) and with
\begin{align}\label{eq:asymptotics 5}
 \wt{\varphi}_1(a)&=\frac{a^2-1}{a^2}= O(a-1), \notag \\
 \wt{\varphi}_2(a)&= -\frac{2}{a}+ \frac{a^2-1}{a^2}\log \left(\frac{a-1}{a+1}\right)=2\Big(-1+O(a-1)\Big)+2 O\Big((a-1) \log (a-1)\Big),\notag \\
 \wt{Q}_1(a)&=\wt{d}_1 O\Big((a-1)^{2} \log^2 (a-1) \Big)
\end{align}
where the expansions hold for $a\in (1,1+\ell]$.
Moreover, there exists $a_* \in (1, 1+\ell]$ such that
\begin{equation}\label{eq:large value}
 |Q(a_*)|\approx \wt{d}_1 \;^{\frac{1}{2}}.
\end{equation}
As a consequence $Q(a_*)$ can be made arbitrarily large by choosing $\wt{d}_1$ sufficiently large.
\end{lem}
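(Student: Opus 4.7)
The plan is to re-run the integral-equation/fixed-point argument of Lemma \ref{l:lemma3} with the same ansatz \eqref{eq:large solution forward from 1}, but on the shrinking interval $I_\ell := (1,1+\ell]$ with $\ell := c\,\wt{d}_1^{-1/2}$ for a small absolute constant $c>0$. The choice of $\ell$ is dictated by the identity $\wt{d}_1\ell^2=c^2$: this product is a (small) absolute constant, precisely the budget needed to keep the nonlinear correction $\wt{Q}_1$ subdominant to the linear profile and to make the contraction go through uniformly in $\wt{d}_1$.

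Setting $Q_0(a):=\wt{d}_1\wt{\varphi}_1(a)+\wt{d}_2\wt{\varphi}_2(a)-2(\wt{d}_3-\wt{d}_2)/a$ and $Q=Q_0+\wt{Q}_1$, I would look for $\wt{Q}_1$ solving
\begin{equation*}
\wt{Q}_1(a)=\int_1^a \frac{\wt{G}(a,b)}{b^2(1-b^2)}\,g(b)\,db,\qquad g(b):=\sin(2Q(b))-2Q(b)-4(\wt{d}_3-\wt{d}_2)/b.
\end{equation*}
The matching condition $\sin(4\wt{d}_3)=4\wt{d}_2$ combined with $Q_0(1)=-2\wt{d}_3$ forces $g(1)=0$, which is exactly what is required to kill the non-integrable part of the Green kernel at the endpoint $b=1$. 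Since $Q_0'(b)=O(\wt{d}_1)+O(\wt{d}_2|\log(b-1)|)$ and $|\cos(2Q)-1|\leq 2$, one reads off $|g'(b)|\lesssim\wt{d}_1$ on $I_\ell$, which integrated against $g(1)=0$ yields the crucial pointwise bound $|g(b)|\lesssim\wt{d}_1(b-1)$.

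Using the explicit formulas for $\wt{\varphi}_1,\wt{\varphi}_2,\wt{W}$ one next derives the kernel estimate
\begin{equation*}
\Bigl|\frac{\wt{G}(a,b)}{b^2(1-b^2)}\Bigr|\lesssim \frac{a-1}{b-1}+1+(a-1)\bigl|\log\tfrac{b-1}{a-1}\bigr|\qquad(1<b<a\leq 1+\ell),
\end{equation*}
and feeding in the bound on $g$ produces $|\wt{Q}_1(a)|\lesssim\wt{d}_1(a-1)^2$, which is compatible with the claimed asymptotics \eqref{eq:asymptotics 5}. Differentiating under the integral yields $|\wt{Q}_1'|\lesssim\wt{d}_1(a-1)$, and the ODE itself then gives $|\wt{Q}_1''|\lesssim\wt{d}_1$. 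These estimates define the natural weighted $C^2$-space $X$, and the map $T:\wt{Q}_1\mapsto T\wt{Q}_1$ takes $X$ into itself. For the contraction I would use the identity $\sin(2Q_f)-\sin(2Q_g)-2(f-g)=-4\sin^2(\xi)(f-g)$, which produces $|\sin(2Q_f)-\sin(2Q_g)-2(f-g)|\leq 4|f-g|$; combining with the kernel estimate releases an extra factor of $(a-1)$, so $\|Tf-Tg\|_X\lesssim\ell\,\|f-g\|_X$, a strict contraction once $c$ is small.

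The large-value claim \eqref{eq:large value} then follows from the ansatz at $a_*:=1+\ell$: the leading linear piece contributes $\wt{d}_1\wt{\varphi}_1(a_*)\approx 2\wt{d}_1\ell=2c\,\wt{d}_1^{1/2}$, whereas the $\wt{d}_2,\wt{d}_3$-contributions are $O(1)$ and $|\wt{Q}_1(a_*)|\lesssim\wt{d}_1\ell^2=O(1)$, all negligible compared with $\wt{d}_1^{1/2}$ once $\wt{d}_1$ is sufficiently large. The main obstacle is precisely the uniform-in-$\wt{d}_1$ contraction estimate; it is resolved by calibrating $\ell=c\,\wt{d}_1^{-1/2}$ so that the large derivative $Q_0'=O(\wt{d}_1)$ driving the nonlinearity is exactly compensated by the shortness of the interval, turning $\wt{d}_1\ell^2=c^2$ into the effective smallness parameter.
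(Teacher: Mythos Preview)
Your proposal is essentially correct and follows the same integral-equation/contraction strategy as the paper, with one simplification and one minor technical oversight.

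The simplification: whereas the paper decomposes the nonlinearity $\sin(2Q)-2\wt{Q}_0-2\wt{Q}_1$ via trigonometric addition formulas into pieces $A=\sin(2\bar{Q}_0)\cos(2\wt{Q}_1)-2\wt{Q}_0$ and $B=\cos(2\bar{Q}_0)\sin(2\wt{Q}_1)-2\wt{Q}_1$ (see \eqref{eq:nonlinearities}) and estimates each separately by expanding around the value $-4\wt{d}_3$, you bypass this by observing $g(1)=0$ and bounding $g'$ directly via the mean value theorem. This is cleaner and makes the role of the matching condition $\sin(4\wt{d}_3)=4\wt{d}_2$ more transparent. The paper's decomposition buys somewhat more explicit control of the individual contributions, but your route is shorter.

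The oversight: your claim that $|g'(b)|\lesssim\wt{d}_1$ \emph{uniformly} on $I_\ell$ is not quite right. You correctly record $Q_0'(b)=O(\wt{d}_1)+O(\wt{d}_2|\log(b-1)|)$, but the second term is not dominated by $\wt{d}_1$ as $b\to1^+$ since $|\log(b-1)|\to\infty$. Integrating honestly gives $|g(b)|\lesssim\wt{d}_1(b-1)+\varepsilon(b-1)|\log(b-1)|$, and after the kernel integration the correction $\varepsilon(a-1)^2|\log(a-1)|$ survives in $\wt{Q}_1$. In your space with norm $\sup(a-1)^{-2}|h|$ this contributes $\varepsilon|\log(a-1)|$, which is unbounded near $a=1$, so the self-map $T:X\to X$ fails as stated. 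The paper handles exactly this by working with the logarithmically weighted norm $\sup\frac{|h|}{(a-1)^2\log^2(a-1)}$ (the quantity $\bar{Q}_1$ in the proof). You should either adopt a similar weight or track the $\wt{d}_2$-contribution separately; this is a routine adjustment and does not affect the structure of your argument or the contraction mechanism, which is driven by $\wt{d}_1\ell^2=c^2$ just as you say.

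The choice $a_*=1+\ell$ versus the paper's $a_*=1+\ell/2$ is immaterial.
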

\begin{proof}
 The key ideas are inherited from the proof of Lemma \ref{l:lemma2} except the fact that we have to deal here with lack of smallness for $\wt{q}_1$:
 this is overcome by a bootstrap argument.
 
 The ansatz for $Q$  is given in integral form as
\begin{equation}\label{eq:ansatz5}
 Q(a)= \wt{Q}_0(a)- (\wt{d}_3- \wt{d}_2) \frac{2}{a}+ \int_1^a \wt{G}(a,b) \frac{\sin(2Q(b)) - 2\wt{Q}_0(b)- 2\wt{Q}_1(b)}{b^2(1-b^2)} db
\end{equation}
with $\wt{Q}_0$ defined as
$$Q_0(a):=\wt{d}_1 \wt{\varphi}_1(a) + \wt{d}_2 \wt{\varphi}_2(a)$$
and where $\wt{G}$ is the Green function defined in \eqref{eq:Green function 2}.
Assuming $Q$ has the form \eqref{eq:large solution forward from 1}, the integral equation \eqref{eq:ansatz5} can be solved by contraction for $\wt{Q}_1$.
More precisely, for $Q$ as in \eqref{eq:large solution forward from 1}, we would like to obtain $\wt{Q}_1$ as fixed point of the following equation
\begin{equation} \label{eq:fixed point problem}
 \wt{Q}_1(a)= \int_1^a \wt{G}(a,b) \frac{\sin(2Q(b)) - 2\wt{Q}_0(b)- 2\wt{Q}_1(b)}{b^2(1-b^2)} db.
\end{equation}

For the sake of clarity let us define
$$\bar{Q}_1(a):= \frac{\wt{Q}_1(a)}{(a-1)^{2} \log^2 (a-1)}$$
In order to run the fixed point argument for equation \eqref{eq:fixed point problem} we need to show that the bound 
\begin{equation} \label{eq:first bound}
 |\bar{Q}_1(a)|\leq  C |\wt{d}_1|
\end{equation}
improves upon itself on the interval $a \in [1, 1 + \ell]$, with $\ell=c \wt{d}_1 \, ^{-\frac{1}{2}}$ as given by the statement of the lemma,
if inserted in the equation \eqref{eq:fixed point problem}. More precisely, our goal is to prove that the bound \eqref{eq:first bound}
entails the following better one
$$|\bar{Q}_1(a)|\leq  \frac{C}{2} |\wt{d}_1|.$$
As a first step, we compute $\frac{\wt{G}(a,b)}{b^2(1-b^2)}$ and we easily obtain 
\begin{align*}
 \frac{\wt{G}(a,b)}{b^2(1-b^2)}&= \frac{a^2-1}{4a^2}\left(\frac{2}{b(b^2-1)}-\frac{1}{b^2}\log \left(\frac{b-1}{b+1}\right)\right)\\&+
\frac{1}{4b^2}\left(-\frac{2}{a}+\frac{a^2-1}{a^2}\log\left(\frac{a-1}{a+1}\right)\right).
\end{align*}
Similarly to \eqref{eq:G1 and G2}, we can write $\frac{\wt{G}(a,b)}{b^2(1-b^2)}$ as the sum of two terms $\wt{G}_1$ and $\wt{G}_2$ which behave
as follows for $1<a\leq 1+\ell$ and $1< b<a$
\begin{align} \label{eq:tilde G1 and G2}
 \wt{G}_1(a,b)\approx (a-1)\left( \frac{1}{(b-1)} + \log \left( b-1 \right)\right)\notag\\
 \wt{G}_2(a,b)\approx - \left(-2 + 2(a-1) \log \left( a-1 \right) \right).
\end{align}
The choice of $l$ is such that on $[1, 1 + \ell]$ the non-linearities involving $\wt{Q}_1$ in $\Big[\sin(2Q(b)) - 2\wt{Q}_0(b)- 2\wt{Q}_1(b)\Big]$
are dominated by their first order (linear) approximation.
We also introduce $\bar{Q}_0(a):=\wt{Q}_0(a)- (\wt{d}_3- \wt{d}_2) \frac{2}{a}$ so that $Q(a)= \bar{Q}_0(a)+\wt{Q}_1(a)$.
Now, we can write $\sin(2Q(b)) - 2\wt{Q}_0(b)- 2\wt{Q}_1(b)$ as follows
\begin{align} \label{eq:nonlinearities}
\sin(2Q) - 2\wt{Q}_0- 2\wt{Q}_1 &= \Big( \sin (2\bar{Q}_0) \cos(2\wt{Q}_1)- 2 \wt{Q}_0\Big)\notag \\
&+ \Big( \cos (2\bar{Q}_0) \sin(2\wt{Q}_1)- 2 \wt{Q}_1\Big)\notag \\
&=: A+B.
\end{align}
The condition $\sin (4\wt{d}_3)= 4\wt{d}_2$ already ensures that $\wt{Q}_1(1)=0$.
Now, we can expand $\sin$ and $\cos$ in \eqref{eq:nonlinearities} and integrate against $\wt{G}_1$ and $\wt{G}_2$,

At first we consider $A$. With the choice of $\ell=c \wt{d}_1 \, ^{-\frac{1}{2}}$ for some small constant $c>0$
and under the assumption \eqref{eq:first bound} on $\wt{Q}_1$, $\cos(2\wt{Q}_1)$ can be approximated by $1$, being the higher order terms dominated by the first order one; as a consequence
$A$ can be further expanded as follows (remember that $\sin (4\wt{d}_3)= 4\wt{d}_2$)
\begin{align} \label{eq:A}
A&\approx \Big[\sin(-4\wt{d}_3) \cos(\beta)+ \cos(-4\wt{d}_3) \sin (\beta) \Big] \cos(2\wt{Q}_1)- \beta +4\wt{d}_2 \notag  \\
&\approx \left[-4\wt{d}_2 \left(1 + (\cos(\beta)-1) \right)+ \left(\beta+ (\sin(\beta)- \beta)\right)\right] - \beta +4\wt{d}_2 \notag   \\
&\approx \left[-4\wt{d}_2 \left(\cos(\beta)-1\right)+  \left( \sin(\beta)- \beta\right) \right]  
\end{align}
where $\beta\approx 4 \wt{d}_1 (b-1) +4 \wt{d}_2 (b-1) \log (b-1)$ and where we also approximated $\cos(-4\wt{d}_3)$ by $1$ being $\wt{d}_3$ small.
If we integrate $A$ against $\wt{G}_1$, in particular against the first and leading term of $\wt{G}_1$ i.e. $(a-1)/(b-1)$ we obtain
\begin{align} \label{eq:integrate A and G1}
\wt{d}^{-1} &(a-1)^{-2} \left| \int_1^a \wt{G}_1(b)\, A \, db \right| \notag \\
&\lesssim \wt{d}^{-1} \ell^{-2} (a-1) \int_1^a \frac{1}{b-1}\,  \left|-4\wt{d}_2 \left(\cos(\beta)-1\right)+  \left( \sin(\beta)- \beta\right)  \right| \, db  \notag \\
&\lesssim c^ {-2} (a-1) \int_1^a \,  \left(4|\wt{d}_2|+ 2 \right)\left|\frac{\beta}{b-1} \right|  \, db  \notag \\ 
&\lesssim     \left(4|\wt{d}_2|+ 2 \right) \left((\wt{d}_1+|\wt{d}_2|)\ell^2+ |\wt{d}_2| \ell^2 |\log \ell| \right) \ll 1
\end{align}
by choice of $\ell=c \wt{d}_1 \, ^{-\frac{1}{2}}$.
Similarly we can also estimate the term corresponding to the integral of $A$ against $\wt{G}_2$.
As far as the integral involving $B$ is concerned we argue as follows.
Using the fact that $\cos(2\bar{Q}_0)$ is bounded and that $\sin(2\wt{Q}_1)$ behaves to leading order as $(2\wt{Q}_1)$ 
due to the choice of $\ell=c \wt{d}_1 \, ^{-\frac{1}{2}}$ and to the assumption \eqref{eq:first bound}, we can estimate
\begin{align} \label{eq:integrate B and G1}
\wt{d}^{-1} &(a-1)^{-2} \left| \int_1^a \wt{G}_1(b)\, B \, db \right| \notag \\
&\leq  \wt{d}^{-1} \ell^{-2} \left| \int_1^a \wt{G}_1(b)\, \Big( \cos (2\bar{Q}_0(b)) \sin(2\wt{Q}_1(b))- 2 \wt{Q}_1(b)\Big) \, db \right|\notag \\
&\lesssim c^ {-2}\left| \int_1^a \wt{G}_1(b)\, \Big[ \left(\cos (2\bar{Q}_0(b))-1\right) \sin(2\wt{Q}_1(b))+ \left(\sin(2\wt{Q}_1(b)) - 2 \wt{Q}_1(b)\right)\Big] \, db \right| \notag \\ 
&\lesssim  c \wt{d}^{-\frac{1}{2}} \ll 1
\end{align}
and an analogous estimate can be obtained when integrating $B$ against $\wt{G}_2$.
The large value exhibited in \eqref{eq:large value} is achieved by choosing $a_*=1+ \ell/2$.
\end{proof}

\begin{lem}
 The solutions to \eqref{eq:self-similar eq} provided by Lemma \ref{l:lemma5} on $(1, 1+l]$ can be extended to $(1,\infty)$ as a smooth globally bounded solution $Q(a)$, which behaves as 
\begin{equation}\label{eq:behavior at infty}
 Q(a) = c_1 + c_2 a^{-1}+ O(a^{-2})
\end{equation}
for $a\rightarrow \infty$ and for non--vanishing constants $c_1$ and $c_2$.
\end{lem}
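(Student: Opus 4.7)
The strategy is to recast equation \eqref{eq:self-similar eq} in a form that makes the exterior problem manifestly benign. For $a > 1$, dividing \eqref{eq:self-similar eq} by $(1-a^2)$ and simplifying $(2/a-2a)/(1-a^2) = 2/a$ produces the conservative form
\begin{equation*}
(a^2 Q')' \;=\; -\frac{\sin(2Q)}{a^2-1}.
\end{equation*}
The decisive feature is that the right-hand side is bounded by $1/(a^2-1)$ uniformly in $Q$. Lemma \ref{l:lemma5} supplies Cauchy data $(Q(a_0), Q'(a_0))$ at $a_0 := 1+\ell$ which, while potentially large (of size up to $\wt{d}_1^{1/2}$ and $\wt{d}_1$ respectively), are finite, and since the coefficients of the ODE and the nonlinearity are smooth on $(1,\infty)\times \R$, local existence gives a $C^\infty$ solution on a maximal interval to the right of $a_0$.

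To rule out finite-distance blow-up and extend the solution to all of $(a_0, \infty)$, integrate the conservative form from $a_0$:
\begin{equation*}
a^2 Q'(a) \;=\; a_0^2 Q'(a_0) \;-\; \int_{a_0}^a \frac{\sin(2Q(b))}{b^2-1}\, db.
\end{equation*}
Since $|\sin(2Q)|\le 1$ and $\int_{a_0}^{\infty} db/(b^2-1) < \infty$, the quantity $a^2 Q'(a)$ is uniformly bounded on $[a_0, \infty)$, hence $|Q'(a)| \lesssim a^{-2}$ is integrable at infinity and $Q$ itself stays bounded on every compact subinterval. This precludes blow-up, and bootstrapping through the smooth ODE yields a $C^\infty$ globally bounded solution on all of $(1, \infty)$.

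The asymptotic expansion now follows from integrating twice more. Integrability of $Q'$ at infinity gives $c_1 := \lim_{a\to\infty} Q(a) \in \R$ and $Q(a) = c_1 + O(1/a)$. Since $(a^2 Q')' = O(1/a^2)$ is also integrable at infinity, $a^2 Q'(a)$ tends to a limit which we label $-c_2$, and
\begin{equation*}
a^2 Q'(a) \;=\; -c_2 \;+\; \int_a^{\infty} \frac{\sin(2Q(b))}{b^2-1}\, db.
\end{equation*}
Expanding $\sin(2Q(b)) = \sin(2c_1) + O(1/b)$ and using $\int_a^{\infty} db/(b^2-1) = 1/a + O(1/a^3)$, this yields $a^2 Q'(a) = -c_2 + \sin(2c_1)/a + O(1/a^2)$. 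Dividing by $a^2$ and integrating from $a$ to $\infty$ delivers the claimed $Q(a) = c_1 + c_2/a + O(1/a^2)$.

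The delicate point, which I expect to be the main obstacle, is the non-vanishing of $c_1$ and $c_2$. Both are smooth functions of the two independent data parameters $(d_0, \wt{d}_1)$ by continuous dependence of the ODE on initial data (the continuity condition $-2\wt{d}_3 = 2d_3$ from Corollary \ref{c:matching at light cone} ties $\wt{d}_2$ to $d_0$). The zero loci $\{c_1 = 0\}$ and $\{c_2 = 0\}$ are generically of codimension one in this two-parameter space, so an arbitrarily small perturbation of $\wt{d}_1$ — which preserves the largeness ensured by Lemma \ref{l:lemma5} — suffices to select $(d_0, \wt{d}_1)$ with $c_1 c_2 \ne 0$. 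As a sanity check, in the small-data regime of Lemma \ref{l:lemma4} one reads off the leading-order relations $c_1 \approx q_1$ and $c_2 \approx -4 q_2$, confirming that non-vanishing is indeed the generic behavior.
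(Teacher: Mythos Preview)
Your proof is correct and rests on the same core observation as the paper's --- namely that the nonlinearity $\sin(2Q)$ is uniformly bounded --- but you make the argument considerably more explicit. The paper's proof is only a few lines: it notes the boundedness of $2\sin Q\cos Q$ and then invokes ``standard elliptic estimates'' to obtain $L^\infty$ bounds independent of the existence interval, hence global extension; it does not spell out the asymptotic expansion \eqref{eq:behavior at infty} at all, nor does it address the non-vanishing of $c_1$ and $c_2$. Your conservative form $(a^2 Q')' = -\sin(2Q)/(a^2-1)$ turns the same boundedness into an immediate uniform estimate on $a^2 Q'$ by one integration, and a second integration then yields both global boundedness and the two-term expansion at infinity --- this is more elementary and more informative than the paper's appeal to elliptic theory. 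On the non-vanishing of $c_1, c_2$, your genericity argument is plausible but not fully rigorous as written (passing from non-vanishing in the small-data regime to codimension-one zero loci in the large-$\wt d_1$ regime would require, e.g., real-analytic dependence on the parameters); however, the paper supplies nothing on this point either, so you have not omitted anything the paper provides.
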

\begin{proof}
Let us recall that we are dealing with the following ODE (\eqref{eq:self-similar eq})
\begin{equation}
 Q''(a) + \frac{2}{a} Q'(a)-\frac{\left( 2 \sin Q(a)  \cos Q (a) \right)}{a^2(1-a^2)} = 0,
\end{equation}
where the non--linearity $\left(2 \sin Q(a)  \cos Q(a)\right)$ is clearly bounded. Hence, away from the singularities $a=0$ and $a=1$, standard elliptic estimates allow to prove 
$L^\infty$ bounds independent of the existence time so that solutions can be extended for all time.

\end{proof}

\section{Excision of singularity near light cone $a = 1$: approximate solutions} \label{sec:regularization}
In the previous section we have shown the existence of self--similar solutions $Q_0$ to \eqref{eq:self-similar eq}
which are smooth away from the light cone but are only continuous at $a=1$ i.e. across the light cone.
Our goal is to construct global smooth solutions to \eqref{eq:scalar wave equation} which have infinite critical norm $\dot{H}^{\frac{3}{2}}$
departing from these self--similar solutions by excision of the singularity near $a=1$. In order to achieve this, we introduce a smooth cut--off function
$\chi(t-r)$ whose support lies at a distance $C$ from the light cone: more precisely $\chi(x)=1$ for $|x|\geq 2C$ and $\chi(x)=0$ for $|x|\leq C$.
In view of Corollary \ref{c:matching at light cone}, we know that near $a=1$, the function $Q_0$
is of the form
\begin{equation}\label{eq:Q_0 near light cone}
 Q_0(a)= C_1 \frac{|a^2-1|}{a^2} +C_2 \frac{|a^2-1|}{a^2} \log\left(\frac{|a-1|}{a+1}\right) +C_3 Q_3(a)+Q_4(a)
\end{equation}
where $Q_3=\frac{2}{a}$ is smooth at $a=1$ and $Q_4(a)=O(|a-1|^2\log^2(|a-1|))$ consists of higher order terms.
For the sake of simplicity we introduce
\[
 R(t,r)=R(a)= C_1 \frac{|a^2-1|}{a^2} +C_2 \frac{|a^2-1|}{a^2} \log\left(\frac{|a-1|}{a+1}\right).
\]
Thus, we modify the exact singular solution near the light cone by introducing the following approximate solution
\begin{equation}\label{eq:approximate solution}
 u_{approx}(t, r): = \chi(t-r)\left[R(a) +Q_4(a)\right] + C_3 Q_3(a)
\end{equation}
Now, we will estimate the errors which arise when computing the expression 
\[
\partial_{tt} u_{approx}- \partial_{rr} u_{approx}- \frac{2}{r} \partial_r  u_{approx} + \frac{f(u_{approx})}{r^2}.
\]
Since for our estimates, the term $Q_4$ is of higher order we will neglect it in the following computations.
First of all, we remark when applying derivatives only to $\chi$ we get $R( \partial_{tt} \chi-\partial_{tt} \chi  )$ which 
vanishes since $\chi(t-r)$ solves the $1$--dimensional wave equation. 
When only one time derivative falls on $\chi(t-r)$ we get 
\begin{equation*}
 2 \chi'(t-r) \partial_t R(t,r)= 2 \chi'(t-r) \sign(t-r)\left(2C_1 \frac{t}{r^2}-2C_2 \frac{t}{r^2} \log\left(\frac{|r-t|}{r+t}\right) -2C_2 \frac{1}{r}\right).
\end{equation*}
Similarly, when one derivative $\partial_r$ falls on $\chi(t-r)$ we obtain
\[
-2\chi'(t-r)\partial_r R(t,r)= 2 \chi'(t-r)\sign(t-r) \left(2C_1 \frac{t^2}{r^3}+2C_2 \frac{t^2}{r^3} \log\left(\frac{|r-t|}{r+t}\right)+ 2C_2 \frac{t}{r^2}\right).
\]
Finally, we compute the contribution from $-\frac{2}{r}R\partial_r \chi$ which gives
\[
-2\chi'(t-r)\left(C_1 \frac{|r^2-t^2|}{r^3}+C_2 \frac{|r^2-t^2|}{r^3} \log\left(\frac{|r-t|}{r+t}\right)\right).
\]
Summing up all the terms involving $\chi'$ we have the following expression 
\begin{align*}
 2 C_2 \,\chi'(t-r)\log\left(\frac{|r-t|}{r+t}\right) &\left(\frac{t}{r^2}\left|\frac{r}{t}-1\right| - \frac{t^2}{r^3}\left|\frac{r}{t}-1\right|\right)- 2C_2\chi'(t-r)\left(\frac{t}{r^2}\left|\frac{r}{t}-1\right|\right)\\
&+2C_1 \chi'(t-r)\left(\frac{t^2}{r^3}\left|\frac{r^2}{t^2}-1\right|-2\frac{t^2}{r^3}\left|\frac{r}{t}-1\right|\right)
\end{align*}
which is of size $t^{-3}$, since $\chi'$ has support in the strip $C\leq|t-r|\leq 2C$ where $|r/t-1|$ behaves as $1/t$.

Moreover, the error from the nonlinear term is of the following form 
\begin{align*}
 \frac{1}{r^2} f&\left(\chi(t-r)\left(C_1 \frac{|r^2- t^2|}{r^2} +C_2 \frac{|r^2- t^2|}{r^2} \log\left(\frac{|r-t|}{r+t}\right)\right)+ 2C_3\sign(t-r)\frac{t}{r} \right)\\
& - \frac{\chi(t-r)}{r^2}f\left(C_1 \frac{|r^2- t^2|}{r^2} +C_2 \frac{|r^2- t^2|}{r^2} \log\left(\frac{|r-t|}{r+t}\right)+ 2C_3 \sign(t-r) \frac{t}{r} \right)\\
& = O(t^{-3})
\end{align*}
This implies that 
\[
\Box u_{approx} + \frac{f(u_{approx})}{r^2}\in L^2(\R^3),
\]
in light of the support properties of this expression, and is of order $t^{-2}$ at fixed time $t$ in this norm. Thus all the errors beat the scaling.

\section{From an approximate solution to an exact solution}\label{sec:completion}

Here we construct exact solutions via the ansatz 
\[
u(t, r) = u_{approx}(t, r) + \epsilon(t, r).
\]
We recall that we are considering the case of target $\Sf^3$, in which case we have $g(u) = \sin u$. Then we obtain the following wave equation which is in fact on $\R^{5+1}$:
\begin{equation}\label{eq:veqn}\begin{split}
\left(\frac{\epsilon}{r}\right)_{tt} - \left(\frac{\epsilon}{r}\right)_{rr} - \frac{4}{r}\left(\frac{\epsilon}{r}\right)_{r}& = -\frac{1}{r}\frac{\sin(2\epsilon) - 2\epsilon}{r^2}\cos(2u_{approx})\\
&-\frac{\sin(2u_{approx})}{r^3}\left(\cos(2\epsilon) - 1\right)\\
&-\frac{2\epsilon}{r^3}\left[\cos(2u_{approx}) - 1\right]\\
&+\frac{e_0}{r}. 
\end{split}\end{equation}
 Note that by introducing the new variable $v = \frac{\epsilon}{r}$, we get ``essentially'' the new wave equation 
 \[
 v_{tt} - \triangle_{\R^5}v = v^3 + \frac{v u_{approx}^2}{r^2} + \frac{v^2 u_{approx}}{r} + \frac{1}{r}e_0.
 \]
 For the purely cubic term, we get the scaling $v(t, r)\rightarrow \lambda v(\lambda t, \lambda r)$, which on $\R^{5+1}$ corresponds to $s_c = \frac{3}{2}$, as expected.  Thus it is natural to try to run an iteration in the space $H_{\R^5}^{\frac{3}{2}}$. 
It is then natural to work with the Strichartz norms  $L_t^\infty L_x^5\cap L_t^2 L_x^{10}\cap L_t^2(\nabla_x^{-\frac{1}{2}}L_x^5)$, with the same scaling as $L_t^\infty\dot{H}^{\frac{3}{2}}_{\R^5}$. 
Then the interaction terms $\frac{v u_{approx}^2}{r^2}, \frac{v^2 u_{approx}}{r}$, appear critical, since the first can be reduced to 
\[
\frac{v u_{approx}}{r t},
\]
which fails logarithmically to belong to $L_t^1 \dot{H}^{\frac{1}{2}}$. Thus, as in the paper \cite{krsc}, we shall also be taking advantage of the Hamiltonian structure to handle this low frequency issue. 
We state 
\begin{prop}\label{prop:vMain}
Let $C\geq 1$ a given constant, $T\geq 1$ sufficiently large, depending on $\tilde{d}_1$ in the approximate solution. 
Assume that the $C^\infty$-smooth data $v[T] = (v(T, \cdot), v_t(T, \cdot))$ are radial and supported in the annulus $r\in [T-C, T+C]$. 
Also, assume that $\tilde{d}_2$ for the approximate solution is sufficiently small (less than an absolute constant), and that for a $\delta_1>0 = \delta_1(C)$ sufficiently small, we have  
\[
\big\|v[T]\big\|_{\dot{H}^{\frac{3}{2}}(\R^5)\cap \dot{H}^1(\R^5)\times \dot{H}^{\frac{1}{2}}(\R^5)\cap L^2(\R^5)}\leq \delta_1. 
\]
Then the problem \eqref{eq:veqn} with initial data $v[T]$ at time $t = T$ admits a global-in-forward time solution $v(t, \cdot)$ of class $C^\infty$. 
\end{prop}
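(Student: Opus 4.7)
The plan is to execute a global Strichartz iteration at the critical Sobolev regularity $\dot{H}^{3/2}(\R^5)$, treating $u_{approx}$ as a slowly-decaying background potential and exploiting the Hamiltonian structure of the unperturbed equation to close the estimates for the cross terms which appear formally critical.

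First I would set up the iteration on $[T,\infty)$ in the Strichartz space
\[
S := L^\infty_t \dot{H}^{3/2}_x \cap L^2_t L^{10}_x \cap L^2_t(\nabla_x^{-1/2} L^5_x),
\]
together with a companion norm one derivative higher so as to encode the extra $\dot{H}^1 \cap L^2$ data regularity. Because $v[T]$ is supported in the annulus $r\in[T-C,T+C]$, its free evolution concentrates in a thin slab around the forward light cone $\{r\approx t\}$ and has $S$-norm controlled by $\delta_1$. The inhomogeneous forcing $e_0/r$ is supported in the strip $\{C\leq |t-r|\leq 2C\}$ and, by the computations at the end of Section \ref{sec:regularization}, is of size $O(t^{-2})$ in $L^2_x$, so it contributes at most $O(T^{-1/2})$ to $L^2_t\dot{H}^{1/2}$, which is negligible for $T$ chosen large depending on $\tilde d_1$.

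Next I would estimate the nonlinear contributions. The purely cubic term $v^3$ is handled by the standard critical trilinear Strichartz estimate on $\R^{5+1}$ and closes from smallness in $S$ alone. The interaction terms $\tfrac{v u_{approx}^2}{r^2}$ and $\tfrac{v^2 u_{approx}}{r}$ are the main obstacle: on the support of $v$ (localized by finite propagation speed from the annular data) one has $u_{approx}=O(1)$ and $u_{approx}/r=O(t^{-1})$, which reduces them schematically to $\tfrac{v u_{approx}}{r t}$ and $\tfrac{v^2 u_{approx}}{r}$; the first fails logarithmically in $L^1_t\dot{H}^{1/2}$, so a pure Strichartz bootstrap does not suffice. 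Following \cite{krsc}, I would move the quadratic piece of the cross term to the left-hand side as a potential, writing $\Box v + V v = \mathcal{N}(v) + \tfrac{e_0}{r}$ with $V$ built from $\cos(2u_{approx})-1$ and $u_{approx}^2$, and exploit positivity of the associated Hamiltonian
\[
E(v,v_t)=\tfrac12\int_{\R^5}\bigl(|v_t|^2+|\nabla v|^2+Vv^2\bigr)\,dx+\tfrac14\int_{\R^5}v^4\,dx,
\]
to derive an almost-conserved $L^\infty_t \dot{H}^1$ bound on $v$ (the defect is quantifiable by $\partial_t u_{approx}$, which is integrable in time on the relevant support); this bound then feeds back into the Strichartz machinery and absorbs the logarithmic loss through Hardy's inequality on $\R^5$ at the low frequencies where the difficulty lives.

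With a global critical-level solution $v\in S$ in hand, I would propagate every higher Sobolev norm $\dot{H}^s$, $s>3/2$, by a routine subcritical Gronwall estimate in which the driving quantities are already controlled by the critical theory; each such norm then grows at most polynomially in $t$ and so is finite on every compact interval. Differentiating the equation in $t$ and iterating propagates regularity of time derivatives as well, and Sobolev embedding on $\R^5$ then promotes the solution to a classical $C^\infty$ function of $(t,r)$, completing the proof. The whole difficulty is concentrated in the Hamiltonian-Strichartz interplay used to beat the logarithmic failure of the critical cross terms; everything else, including the cubic self-interaction, the annular error term, and the persistence of smoothness, is essentially standard once those critical estimates are closed.
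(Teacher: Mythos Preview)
Your high-level architecture matches the paper's: combine Strichartz control at the critical $\dot H^{3/2}$ level with an energy-type bound at $\dot H^1$, and feed the latter back to absorb the logarithmic divergence in the former. However, two of your concrete claims are incorrect and leave a genuine gap in the argument.

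First, the Hamiltonian you write down is not coercive. The potential $V$ built from $[\cos(2u_{approx})-1]/r^2$ is \emph{non-positive}, and the cubic self-interaction in \eqref{eq:veqn} has the focusing sign (expanding $\sin(2vr)-2vr$ gives $+\tfrac{4}{3}v^3+\ldots$), so your $+\tfrac14\int v^4$ has the wrong sign as well. The paper does not invoke positivity at all: it bounds the boundary term $\int r^{-2}v^2[\cos(2u_{approx})-1]\,dx$ directly, using Hardy's inequality on the region $r<t-C$ (where $|u_{approx}|\ll_{\tilde d_2}1$) and Strauss' radial inequality together with the thinness of the annulus on $r\in[t-C,t+C]$.

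Second, and more seriously, the energy defect driven by $\partial_t u_{approx}$ is \emph{not} integrable in time: one has $u_{approx,t}\sim t^{-1}$, so the relevant space-time integrals are of the form $\int_T^{T_1} t^{-1}\|\nabla v(t)\|_{L^2}^2\,dt$, which diverges logarithmically if $\|\nabla v\|_{L^2}$ is merely bounded. The paper's resolution is to \emph{allow} polynomial growth of the energy norm, $\|v[t]\|_{\dot H^1\times L^2}\lesssim (t/T)^{\gamma}$ for a small $\gamma>0$, and then, for the Strichartz bootstrap of the linear-in-$v$ cross term, to split by a time-dependent Littlewood--Paley cutoff at frequency $t^{-\delta}$: the high-frequency piece gains decay from the regularity of $u_{approx}$, while the low-frequency piece uses the growing $\dot H^1$ bound together with the extra factor $t^{-\delta/4}$ coming from $\nabla_x^{1/2}P_{<t^{-\delta}}$. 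The scheme closes only under the hierarchy $\gamma\ll\delta$. Your sketch contains neither the $(t/T)^{\gamma}$ growth nor the time-dependent frequency decomposition, and without them the bootstrap does not close.
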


The existence of the solution $v$ will follow from a standard local existence result as well as a more sophisticated bootstrap argument which is at the heart of the matter, as in \cite{krsc}.
The local existence result is as follows: 
\begin{prop}\label{prop:local exist} 
Given data $v[T]$ with the same support properties as above, satisfying 
\[
\big\|v[T]\big\|_{\dot{H}^{\frac{3}{2}}(\R^5)\times \dot{H}^{\frac{1}{2}}(\R^5)}\ll 1,
\]
then there exists a time $T_1>T$ and a solution of \eqref{eq:veqn} of class 
\[
v\in L_t^\infty \dot{H}^{\frac{3}{2}}([T, T_1]\times \R^5),\,v_t\in  L_t^\infty \dot{H}^{\frac{1}{2}}([T, T_1]\times \R^5)
\]
with compact support on every time slice $t\in [T, T_1]$. If $v[T]\in \dot{H}^s\cap \dot{H}^{s-1}$, $s>\frac{3}{2}$, then so is the solution at all times $t\in [T, T_1]$. 
\end{prop}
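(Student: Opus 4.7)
The plan is a standard critical Strichartz fixed-point argument for a semilinear wave equation on $\R^{5+1}$, the point being that the apparent singularities of the coefficients in \eqref{eq:veqn} are harmless on the support of the solution. By finite speed of propagation, since $v[T]$ is supported in the annulus $r\in[T-C,T+C]$, any solution on a slab $[T,T_1]$ with $T_1-T\leq 1$ is supported in $r\sim T$, well away from the origin. On this support region the coefficients obey
\begin{equation*}
 \Bigl|\frac{u_{approx}}{r}\Bigr|\lesssim T^{-1},\qquad \Bigl|\frac{u_{approx}^{\,2}}{r^{2}}\Bigr|\lesssim T^{-2},\qquad \Bigl|\frac{e_0}{r}\Bigr|\lesssim T^{-4},
\end{equation*}
all smooth and, for $T$ large, small. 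Thus \eqref{eq:veqn} becomes a semilinear wave equation on $\R^{5+1}$ with purely cubic leading nonlinearity $v^3$, plus smooth small lower-order terms $a(t,r)v^{2}+b(t,r)v$, and a smooth small source $c(t,r)$, all supported in a fixed bounded annulus of thickness $\sim C$.

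Next I would set up a Duhamel fixed point in the critical Strichartz space for $s_c=\tfrac{3}{2}$ on $\R^5$. A convenient choice is
\begin{equation*}
 \|v\|_S:=\|v\|_{L^\infty_t\dot{H}^{3/2}_x}+\|v_t\|_{L^\infty_t\dot{H}^{1/2}_x}+\|v\|_{L^2_tL^{10}_x}+\|v\|_{L^\infty_tL^5_x},
\end{equation*}
all norms on $[T,T_1]\times\R^5$; these are wave-admissible and scale as $\dot{H}^{3/2}(\R^5)$. Standard Strichartz then gives $\|v_{\mathrm{free}}\|_S\lesssim\delta_1\ll 1$ for the free evolution of $v[T]$. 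The cubic nonlinear estimate
\begin{equation*}
 \|v^3\|_N\lesssim\|v\|_S^3,
\end{equation*}
with $N$ a dual Strichartz space, obtained via H\"older, the fractional Leibniz rule, and the Sobolev embedding $\dot{H}^{3/2}(\R^5)\hookrightarrow L^5$, together with the variable-coefficient bounds
\begin{equation*}
 \|av^2\|_N\lesssim T^{-1}\|v\|_S^2,\qquad \|bv\|_N\lesssim T^{-2}(T_1-T)^{1/2}\|v\|_S,\qquad \|c\|_N\lesssim T^{-3},
\end{equation*}
then show that the Duhamel map is a contraction on a small ball in $S$, provided $\delta_1$ and $T_1-T$ are sufficiently small. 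The unique fixed point is the desired $v\in S$ solving \eqref{eq:veqn} on $[T,T_1]$; compact support on each time slice is automatic by finite speed of propagation.

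Propagation of higher regularity $v\in\dot{H}^s\cap\dot{H}^{s-1}$ for $s>\tfrac{3}{2}$ is then a routine commutator/Gr\"onwall argument: applying $\langle\nabla\rangle^{s-3/2}$ to the Duhamel identity and running the same Strichartz estimates, one controls the $\dot{H}^s$-norm linearly in itself and in the already-bounded $S$-norm of $v$. Since $v[T]$ is $C^\infty$ of compact support, it lies in $\dot{H}^s\cap\dot{H}^{s-1}$ for every $s$, and this bound propagates to give $v\in C^\infty([T,T_1]\times\R^5)$, completing the statement. The main technical point is closing the contraction at the critical regularity, where scaling forbids obtaining smallness in $v^3$ from the shortness of the time interval; here it is supplied entirely by the hypothesis $\|v[T]\|_{\dot{H}^{3/2}\times \dot{H}^{1/2}}\ll 1$ together with the smallness of the variable coefficients inherited from large $T$.
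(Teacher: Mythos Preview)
Your proposal is correct and matches the paper's approach: the paper does not actually give a proof of this proposition, stating only that ``the proof is standard and we refer to \cite[Section~7]{krsc} for a similar argument,'' and your sketch is precisely such a standard critical Strichartz fixed-point argument adapted to the present coefficients. The key observation you make---that on the short time interval the support stays in the annulus $r\sim T$, so the apparently singular coefficients $r^{-1},r^{-2},r^{-3}$ and the source $e_0/r$ are smooth and small there---is exactly what makes the argument routine.
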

The proof is standard and we refer to \cite[Section 7]{krsc} for a similar argument. 
\\

Before stating the key bootstrap proposition, we recall the following set of standard Strichartz estimates, in the $\R^{5+1}$-setting: 
\begin{lem} Let $\Box_{5+1} u = 0$. Then for $\frac{1}{p} + \frac{2}{q}\leq 1$,  $p\geq 2$, we have ($u[0] = (u(0, \cdot), u_t(0, \cdot))$)
\[
\big\|(-\triangle)^{\frac{\gamma}{2}}u\big\|_{L_t^p L_x^q}\leq C\big\|u[0]\big\|_{\dot{H}^{\frac{3}{2}}\times \dot{H}^{\frac{1}{2}}},\,\gamma = -1 + \frac{1}{p} + \frac{5}{q}. 
\]

\end{lem}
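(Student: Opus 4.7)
The plan is to derive this as the standard Strichartz estimate for the wave equation in dimension five, via the Keel--Tao $TT^*$ machinery. First I would reduce, via spectral decomposition, to the free half--wave propagator: writing
\[
u(t) = \cos(t\sqrt{-\Delta})\,u(0) + \frac{\sin(t\sqrt{-\Delta})}{\sqrt{-\Delta}}\,u_t(0),
\]
the lemma reduces to proving
\[
\bigl\| |\nabla|^{\gamma - s}\, e^{\pm it\sqrt{-\Delta}} f \bigr\|_{L_t^p L_x^q}\lesssim \|f\|_{L^2(\R^5)}
\]
for $f\in L^2$, with $s = 3/2$ and $\gamma = -1 + 1/p + 5/q$ being forced by scaling. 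By a standard Littlewood--Paley decomposition $f = \sum_k P_k f$ combined with the rescaling $x\mapsto 2^{-k} x$, $t\mapsto 2^{-k} t$, matters reduce to the unit-frequency piece $f = P_0 f$.

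Next I would invoke the two basic ingredients. The energy identity gives
\[
\bigl\| e^{\pm it\sqrt{-\Delta}} P_0 f \bigr\|_{L^2_x}\;=\;\|P_0 f\|_{L^2_x},
\]
while the standard stationary-phase / dispersive bound for the wave equation in $\R^5$ gives
\[
\bigl\| e^{\pm it\sqrt{-\Delta}} P_0 f\bigr\|_{L^\infty_x}\;\lesssim\;(1+|t|)^{-(n-1)/2}\|f\|_{L^1_x}\;=\;(1+|t|)^{-2}\|f\|_{L^1_x}.
\]
Feeding these two bounds into the abstract Keel--Tao framework yields, for every wave-admissible pair
\[
\frac{1}{p}+\frac{n-1}{2q}\leq \frac{n-1}{4},\qquad p\geq 2,\qquad (p,q,n)\neq (2,\infty,3),
\]
which in dimension $n=5$ is exactly $\tfrac{1}{p}+\tfrac{2}{q}\leq 1$, the frequency-localized Strichartz bound. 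Summing in $k$ using the square-function characterization of $L^q$ for $2\leq q <\infty$ (and Minkowski in time, since $p\geq q$ is not required here outside the sharp endpoint), I recover the full estimate for data in $L^2$, and hence the stated inequality after unwinding the rescaling by $|\nabla|^{s-3/2}$.

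The only place where the Keel--Tao argument is genuinely delicate is the sharp endpoint, which in $\R^{5+1}$ corresponds to the pair $(p,q)=(2,4)$ with equality in $\tfrac{1}{p}+\tfrac{2}{q}=1$; however, the norms actually used in Section \ref{sec:completion}, namely $L_t^\infty L_x^5$, $L_t^2 L_x^{10}$ and $L_t^2\bigl(|\nabla|^{-1/2}L_x^5\bigr)$, all satisfy the strict inequality $\tfrac{1}{p}+\tfrac{2}{q}<1$ and lie safely inside the non-endpoint range. Thus the main obstacle is just bookkeeping the scaling exponent $\gamma=-1+1/p+5/q$, which follows automatically once one tracks how $|\nabla|^{s-3/2}$ interacts with the unit-frequency rescaling; no endpoint analysis is required and the lemma follows from the classical Ginibre--Velo / Keel--Tao estimates.
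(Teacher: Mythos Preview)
The paper does not actually prove this lemma; it is stated without proof as a recalled ``standard set of Strichartz estimates'' for the $\R^{5+1}$ wave equation, and is used as a black box in the bootstrap argument that follows. Your outline via the half-wave reduction, Littlewood--Paley localization, the dispersive bound $(1+|t|)^{-2}$, and the Keel--Tao $TT^*$ machinery is a correct sketch of the classical proof of exactly this result, and your identification of the admissibility condition and the (unneeded) endpoint $(p,q)=(2,4)$ is accurate.
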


Next, the bootstrap proposition: 

\begin{prop}\label{prop:bootstrap} Let us assume all hypotheses of Proposition~\ref{prop:vMain}. Then there exists $C_1>1$ with $C_1\delta_1\ll 1$, as well as a constant $\gamma = \gamma(\tilde{d}_{1,2}, \delta_1, T)>0$, such that for any $T_1>T$, the following conclusion holds: 
\[
\big\|v\big\|_{L_t^2(L_x^{10}\cap \nabla_x^{-\frac{1}{2}}L_x^5)([T, T_1]\times \R^5)} + \sup_{t\in [T, T_1]}\big\|v[t, \cdot]\big\|_{\dot{H}^{\frac{3}{2}}\cap \left(\frac{t}{T}\right)^{\gamma}\dot{H}^1(\R^5)\times \dot{H}^{\frac{1}{2}}\cap \left(\frac{t}{T}\right)^{\gamma}L^2(\R^5)}\leq C_1\delta_1
\]
implies 
\[
\big\|v\big\|_{L_t^2(L_x^{10}\cap \nabla_x^{-\frac{1}{2}}L_x^5)([T, T_1]\times \R^5)} + \sup_{t\in [T, T_1]}\big\|v[t, \cdot]\big\|_{\dot{H}^{\frac{3}{2}}\cap \left(\frac{t}{T}\right)^{\gamma}\dot{H}^1(\R^5)\times \dot{H}^{\frac{1}{2}}\cap \left(\frac{t}{T}\right)^{\gamma}L^2(\R^5)}\leq \frac{C_1}{2}\delta_1.
\]

\end{prop}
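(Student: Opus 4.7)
The plan is a Duhamel/Strichartz continuity-style argument refined by a weighted energy estimate at subcritical regularity. Because $v[T]$ is supported in the annulus $r\in[T-C,T+C]$ and the source term in \eqref{eq:veqn} is supported near the forward light cone, finite propagation on $\R^{5+1}$ keeps $v(t,\cdot)$ supported in a thin strip around $r=t$ where $r\sim t$; on this support one has $u_{\mathrm{approx}}/r\lesssim t^{-1}$ and $u_{\mathrm{approx}}^2/r^2\lesssim \tilde d_1^2\, t^{-2}$. I would write the Duhamel formula based at time $T$ and apply the Strichartz lemma to control the critical norms $L^2_t(L^{10}_x\cap\nabla_x^{-1/2}L^5_x)$ and $\sup_t\|v[t]\|_{\dot H^{3/2}\times\dot H^{1/2}(\R^5)}$ by Strichartz norms of the data plus dual Strichartz norms of the right-hand side of \eqref{eq:veqn}.

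Among the nonlinear terms, the purely cubic term $v^3$ is estimated by H\"older and the Sobolev embedding $\dot H^{3/2}(\R^5)\hookrightarrow L^5$, yielding a bound of order $(C_1\delta_1)^3$ which is $\ll C_1\delta_1/2$. The source term $e_0/r$ has size $\lesssim t^{-2}$ in $L^2_x(\R^5)$ at fixed time (by Section~\ref{sec:regularization} combined with the $r\sim t$ localization of its support), so its contribution to the Duhamel integral is $\lesssim T^{-1}$, which is $\ll\delta_1$ once $T=T(\tilde d_1)$ is taken large. The mixed nonlinearity $v^2 u_{\mathrm{approx}}/r$ benefits from the extra factor $r^{-1}\lesssim t^{-1}$ on the support, gaining a power $T^{-1/2}$ after integration in $L^2_t$ and contributing $\lesssim T^{-1/2}(C_1\delta_1)^2$; all three of these terms fit comfortably into the bootstrap.

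The essential difficulty is the linear-in-$v$ potential term $v u_{\mathrm{approx}}^2/r^2$, which at critical scaling fails logarithmically to be integrable in $L^1_t \dot H^{1/2}_x$. Following \cite{krsc} I would treat it by the Hamiltonian energy method rather than by Strichartz: applying the standard energy identity at the subcritical $\dot H^1\times L^2$ level, the pointwise bound on the potential produces a Gronwall factor $(t/T)^{\gamma_0}$ with $\gamma_0=\gamma_0(\tilde d_1,\tilde d_2)$, so that choosing $\gamma$ slightly larger than $\gamma_0$ allows the weighted energy bound to close with constant $C_1\delta_1/2$, provided $\tilde d_2$ is small enough for the coefficient of the potential's low-frequency part to be under control. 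Interpolating this weighted subcritical bound with the critical $\dot H^{3/2}\times\dot H^{1/2}$ bound then converts the bad term into something integrable in time with a net power gain $T^{-\alpha}$, closing the Strichartz/critical Sobolev half of the bootstrap. The main obstacle is precisely this coupling of critical Strichartz with subcritical weighted energy: it is what forces the weight $(t/T)^\gamma$ on $\dot H^1\times L^2$, and it is what ties $\gamma$, $T$, and $C_1\delta_1$ to the background size $\tilde d_1$.
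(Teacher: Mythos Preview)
Your overall architecture matches the paper: a weighted $\dot H^1\times L^2$ energy estimate to absorb the linear potential term, followed by Strichartz/Duhamel at the critical level, with the subcritical bound fed back in to close the logarithmically divergent contribution. But there is one genuine error and one place where the sketch is too vague to close.

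\medskip
\textbf{The support claim is false.} You assert that finite propagation keeps $v(t,\cdot)$ supported in a thin strip around $r=t$, and then use $r\sim t$ on $\operatorname{supp} v$ to get pointwise bounds on the potential. This is not correct: the data lie in the annulus $[T-C,T+C]$, but at later times the support fills the full ball $r\le t+C$ (the paper uses exactly this, invoking Huygens only for the outer bound). Consequently your blanket bound $u_{\mathrm{approx}}^2/r^2\lesssim \tilde d_1^2 t^{-2}$ on $\operatorname{supp} v$ is unjustified, and more importantly the mechanism by which the energy estimate closes is different from what you describe. In the paper the potential term in the energy identity is integrated by parts in time; the resulting boundary term $\int r^{-2}v^2[\cos(2u_{\mathrm{approx}})-1]\,dx$ is split into the interior region $r<t-C$, where smallness comes from $|\cos(2u_{\mathrm{approx}})-1|\ll_{\tilde d_2}1$ together with Hardy, and the strip $r\in[t-C,t+C]$, where $u_{\mathrm{approx}}$ is large but Strauss' inequality $|v|\lesssim r^{-3/2}\|v\|_{\dot H^1}$ plus the thinness of the strip and $T\gg_{\tilde d_1}1$ give smallness. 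This two-region splitting is precisely why the hypotheses single out $\tilde d_2$ small and $T$ large in terms of $\tilde d_1$; your thin-strip shortcut erases this structure.

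\medskip
\textbf{The critical-level closure needs the frequency decomposition.} ``Interpolating the weighted subcritical bound with the critical bound'' is the right intuition but not an argument. What the paper actually does for $-\frac{2\epsilon}{r^3}[\cos(2u_{\mathrm{approx}})-1]$ in $L^1_t\dot H^{1/2}$ is a Littlewood--Paley splitting of the potential at frequency $t^{-\delta}$: the high-frequency part gains a power of $t$ from the derivative bound $\nabla_x\big(r^{-2}[\cos(2u_{\mathrm{approx}})-1]\big)=O(t^{-3}\log t)$; the low-frequency part is further split according to the frequency of $v$, and the low--low piece is where the subcritical $\|v\|_{\dot H^1}\lesssim (t/T)^{\gamma}C_1\delta_1$ bound is used, together with the gain $t^{-\delta/4}$ from $\nabla_x^{1/2}P_{<t^{-\delta}}$, under the constraint $\gamma\ll\delta$. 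Without specifying this decomposition (or an equivalent device) you cannot turn the logarithmic failure into an integrable power, so this step is a gap as written.
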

\begin{proof} We follow closely the procedure in \cite{krsc}. We commence with the energy type norm, i. e. 
\[
\sup_{t\in [T, T_1]}\big\|v[t, \cdot]\big\|_{\left(\frac{t}{T}\right)^{\gamma}\dot{H}^1(\R^5)\times \left(\frac{t}{T}\right)^{\gamma}L^2(\R^5)}.
\]
Multiplying \eqref{eq:veqn} by $v_t$ and integrating in space-time, we get 
\begin{equation}\label{eq:enident}\begin{split}
\int_{\R^5}[v_t^2 + |\nabla_x v|^2]\,dx|_{t=T_1} &= \int_{\R^5}[v_t^2 + |\nabla_x v|^2]\,dx|_{t=T}\\& - \int_T^{T_1}\int_{\R^5}\frac{1}{r}\frac{\sin(2vr) - 2vr}{r^2}\cos(2u_{approx})v_t\,dx dt\\
& - \int_T^{T_1}\int_{\R^5}\frac{\sin(2u_{approx})}{r^3}\big(\cos(2vr) - 1\big)v_t\,dx dt\\
& -  \int_T^{T_1}\int_{\R^5}\frac{2v}{r^2}\big[\cos(2u_{approx}) - 1\big] v_t\,dx dt\\
& +  \int_T^{T_1}\int_{\R^5}\frac{e_0}{r}v_t\,dx dt.\\
\end{split}\end{equation}
We treat the terms on the right via integration by parts. For the second term on the right, introducing 
\[
E(x): = \int_0^x (\sin y - y)\,dy,\,|E(x)|\lesssim |x|^4, 
\]
we get 
\begin{align*}
& \int_T^{T_1}\int_{\R^5}\frac{1}{r}\frac{\sin(2vr) - 2vr}{r^2}\cos(2u_{approx})v_t\,dx dt\\
& =\int_{\R^5} r^{-4}E(vr)\cos(2u_{approx})\,dx|_{T}^{T_1} -  \int_T^{T_1}\int_{\R^5}2r^{-4}E(vr)\sin(2u_{approx})u_{approx, t}\,dx dt.\\
\end{align*}
To bound these terms we use interpolation between $L_x^{\frac{10}{3}}$ and $L_x^5$. With 
\[
\frac{1}{4} = \alpha\cdot\frac{3}{10} + (1-\alpha)\cdot\frac{1}{5} = \frac{1}{5} + \frac{\alpha}{10},
\]
we get $\alpha = \frac{1}{2}$, whence 
\begin{align*}
\left|\int_{\R^5} r^{-4}E(vr)\cos(2u_{approx})\,dx|_{T}^{T_1}\right|\lesssim \int_{\R^5} v^4\,dx &\lesssim \sum_{t = T, T_1}\|\nabla_x v(t,\cdot)\|_{L_x^2}^2\left(\int_{\R^5}|v|(t, \cdot)^5\,dx\right)^{\frac{2}{5}}\\
&\ll \sup_{t\in [T, T_1]}\|\nabla_x v(t,\cdot)\|_{L_x^2}^2,
\end{align*}
where in the last step we have used the bootstrap assumption. This can then be easily absorbed on the left hand side in \eqref{eq:enident}. \\
On the other hand, for the space time integral, we have 
\begin{align*}
&\left|\int_T^{T_1}\int_{\R^5}2r^{-4}E(vr)\sin(2u_{approx})u_{approx, t}\,dx dt\right|\\
&\lesssim \int_T^{T_1}\int_{\R^5}t^{-1} v^4\,dx dt\leq \sup_{t\in [T, T_1]}\left(\int_{\R^5}|v|(t, \cdot)^5\,dx\right)^{\frac{2}{5}}\int_T^{T_1}t^{-1}\|\nabla_x v(t,\cdot)\|_{L_x^2}^2\,dt.
\end{align*}
To bound this last term, using the bootstrap assumption, we have (with an absolute implied constant independent of all other constants)
\begin{align*}
&\sup_{t\in [T, T_1]}\left(\int_{\R^5}|v|(t, \cdot)^5\,dx\right)^{\frac{2}{5}}\int_T^{T_1}t^{-1}\|\nabla_x v(t,\cdot)\|_{L_x^2}^2\,dt\\
&\lesssim (C_1\delta_1)^2 \int_T^{T_1}t^{-1}(C_1\delta_1)^2\left(\frac{t}{T}\right)^{2\gamma}\,dt\leq \frac{(C_1\delta_1)^2}{2\gamma}(C_1\delta_1)^2\left(\frac{T_1}{T}\right)^{2\gamma}, 
\end{align*}
which suffices for the bootstrap, provided $\delta_1^2\ll \gamma$. This deals with the second term on the right hand side of \eqref{eq:enident}. To deal with the third term, write $F(x): = \int_0^x[\cos x - 1]\,dx$, 
whence $|F(x)|\lesssim |x|^3$. Then we obtain 
\begin{align*}
&\int_T^{T_1}\int_{\R^5}\frac{\sin(2u_{approx})}{r^3}\left(\cos(2vr) - 1\right)v_t\,dx dt\\
& = \int_{\R^5}r^{-4}F(vr)\sin(2u_{approx})\,dx|_{T}^{T_1}\\
& - \int_T^{T_1}\int_{\R^5}r^{-4}F(vr)\cos(2u_{approx})\cdot 2u_{approx, t}\,dxdt.
\end{align*}
Here we have 
\begin{align*}
\left| \int_{\R^5}r^{-4}F(vr)\sin(2u_{approx})\,dx|_{T}^{T_1}\right|
&\leq \sum_{t = T, T_1} \int_{\R^5}\frac{|v^3(t, \cdot)|}{r}\,dx\\
&\leq 2\sup_{t\in [T, T_1]}\left\|\frac{v(t, \cdot)}{r}\right\|_{L_x^2}\left\|v^2(t, \cdot)\right\|_{L_x^2}\\
&\lesssim 2\sup_{t\in [T, T_1]}\left\|\nabla_x v(t, \cdot)\right\|_{L_x^2}^2\left\|v(t, \cdot)\right\|_{L_x^5}\\
&\ll \sup_{t\in [T, T_1]}\left\|\nabla_{x} v\right\|_{L_x^2}^2,
\end{align*}
where we have used Hardy's inequality and the bootstrap assumption. This can again be absorbed on the left hand side of \eqref{eq:enident}. 
We similarly infer the bound 
\begin{align*}
&\left| \int_T^{T_1}\int_{\R^5}r^{-4}F(vr)\cos(2u_{approx})\cdot 2u_{approx, t}\,dxdt\right|\\
&\lesssim \frac{C\delta_1}{\gamma}(C_1\delta_1)^2\left(\frac{T_1}{T}\right)^{2\gamma},
\end{align*}
which suffices provided $\delta_1\ll \gamma$. 
\\

For the fourth term on the right hand side of \eqref{eq:enident}, we have to argue slightly differently, since now the smallness has to come from $u_{approx}$, which however is large immediately outside the light cone. Write 
\begin{align*}
&\int_T^{T_1}\int_{\R^5}\frac{2v}{r^2}\big[\cos(2u_{approx}) - 1\big] v_t\,dx dt\\
& = \int_{\R^5}\frac{v^2}{r^2}\big[\cos(2u_{approx}) - 1\big]\,dx|_{T}^{T_1}\\
& + \int_T^{T_1}\int_{\R^5}\frac{v^2}{r^2}\sin(2u_{approx})u_{approx, t}\,dx dt.\\
\end{align*}
For the first term on the right and evaluated at $t = T_1$, we get 
\begin{align*}
 \int_{\R^5}\frac{v^2}{r^2}\big[\cos(2u_{approx}) - 1\big]\,dx|_{T_1} &=  \int_{r< T_1 - C}\frac{v^2}{r^2}\big[\cos(2u_{approx}) - 1\big]\,dx|_{T_1}\\
 & + \int_{r\in[T_1 - C, T_1+C]}\frac{v^2}{r^2}\big[\cos(2u_{approx}) - 1\big]\,dx|_{T_1}.\\
\end{align*}
For the first term use that $|\cos(2u_{approx}) - 1|\ll_{\tilde{d}_2} 1$, whence by Hardy's inequality 
\begin{align*}
\left|\int_{r< T_1 - C}\frac{v^2}{r^2}\left[\cos(2u_{approx}) - 1\right]\,dx|_{T_1}\right|\ll \big\|\nabla_x v(T_1,\cdot)\big\|_{L_x^2}^2,
\end{align*}
which can be absorbed on the right hand side of \eqref{eq:enident}. 
For the remainder term, smallness has to be a consequence of the additional $r$-localization. In fact, from Strauss' inequality for radial functions, we infer 
\[
\left|v(t, r)\right|\lesssim r^{-\frac{3}{2}}\left\|v(t,\cdot)\right\|_{\dot{H}^1},
\]
and so 
\begin{align*}
&\left|\int_{r\in[T_1 - C, T_1+C]}\frac{v^2}{r^2}\big[\cos(2u_{approx}) - 1\big]\,dx|_{T_1}\right|\\
&\lesssim \big\|v(T_1,\cdot)\big\|_{\dot{H}^1}^2\int_{r\in[T_1 - C, T_1+C]}r^{-5}\cdot r^4\,dr\ll  \big\|v(T_1,\cdot)\big\|_{\dot{H}^1}^2
\end{align*}
since $T_1>T\gg 1$ by assumption. Hence this term can be absorbed on the right hand side of \eqref{eq:enident}. 
For the space time integral above, we similarly divide it into an integral over $r<t-C, r\in [t-C, t+C]$, and by similar reasoning we obtain 
\begin{align*}
&\left| \int_T^{T_1}\int_{\R^5}\frac{v^2}{r^2}\sin(2u_{approx})u_{approx, t}\,dx dt\right|\\
&\ll \int_T^{T_1}\frac{\big\|\nabla_x(t, \cdot)\big\|_{L_x^2}^2}{t}\,dt
\end{align*}
where the implied constant depends on $\tilde{d}_1$ as well as $T$(in particular, the latter needs to be large enough in relation to $\tilde{d}_1$ for this term to be small), and so we can again close provided the implied constant is small enough in relation to $\gamma$. 
\\

Finally, to control the last term on the right in \eqref{eq:enident}, we use 
\begin{align*}
\left|\int_T^{T_1}\int_{\R^5}v_t\frac{e_0}{r}\,dxdt\right|&\lesssim \int_T^{T_1}\|v_t(t, \cdot)\|_{L_x^2}\|\frac{e_0}{r}(t,\cdot)\|_{L_x^2}\,dt\\
&\lesssim \int_T^{T_1}\left(\frac{\|v_t(t, \cdot)\|_{L_x^2}^2}{t^2} + t^{-2}\right)\,dt\ll (C_1\delta_1)^2\left(\frac{T_1}{T}\right)^{2\gamma}
\end{align*}
provided we pick $T$ sufficiently large. This completes the bootstrap for the norm 
\[
\sup_{t\in [T, T_1]}\big\|v[t, \cdot]\big\|_{(\frac{t}{T})^{\gamma}\dot{H}^1(\R^5)\times (\frac{t}{T})^{\gamma}L^2(\R^5)}.
\]

We continue with the Strichartz type norms of critical scaling, given by 
\[
\big\|v\big\|_{L_t^2(L_x^{10}\cap \nabla_x^{-\frac{1}{2}}L_x^5)([T, T_1]\times \R^5)} + \big\|(v, v_t)\big\|_{L_t^\infty \dot{H}^{\frac{3}{2}}\times L_t^\infty \dot{H}^{\frac{1}{2}}([T, T_1]\times \R^5)}.
\]
Using the standard Strichartz estimates for free waves on $\R^{5+1}$, it suffices to prove the bound
\[
\big\|F(v)\big\|_{L_t^1\dot{H}^{\frac{1}{2}}([T, T_1]\times \R^5)}\ll C_1\delta_1
\]
where $F(v)$ denotes the right hand side of \eqref{eq:veqn}. We estimate the individual components on the right: 
\\

{\it{The contribution of $ -\frac{1}{r}\frac{\sin(2\epsilon) - 2\epsilon}{r^2}\cos(2u_{approx})$.}} We can bound this by 
\begin{align*}
&\left\|\frac{1}{r}\frac{\sin(2\epsilon) - 2\epsilon}{r^2}\cos(2u_{approx})\right\|_{L_t^1 \dot{H}^{\frac{1}{2}}([T, T_1]\times \R^5)}\\
&\lesssim \big\| v^3\big\|_{L_t^1 \dot{H}^{\frac{1}{2}}([T, T_1]\times \R^5)} + \big\|v^3(\nabla_x^{\frac{1}{2}}v) r\big\|_{L_t^1L_x^2([T, T_1]\times \R^5)}
+\big\| v^4 r^{\frac{1}{2}}\big\|_{L_t^1L_x^2([T, T_1]\times \R^5)}\\
& + \big\|v^3 t^{-\frac{1}{2}}\big\|_{L_t^1L_x^2([T, T_1]\times \R^5)},
\end{align*}
where we have taken advantage of writing 
\[
\frac{1}{r}\frac{\sin(2\epsilon) - 2\epsilon}{r^2}\cos(2u_{approx}) = v^3 \frac{\sin(2\epsilon) - 2\epsilon}{\epsilon^3}\cos(2u_{approx})
\]
and also used the fractional derivative Leibniz rule. 
Then we estimate 
\begin{align*}
\|v^3\|_{L_t^1\dot{H}^{\frac{1}{2}}([T, T_1]\times \R^5)}&\lesssim \|\nabla_x^{\frac{1}{2}}v\|_{L_t^\infty L_x^{\frac{10}{3}}([T, T_1]\times \R^5)}\|v\|_{L_t^2 L_x^{10}([T, T_1]\times \R^5)}^2\\&\lesssim \|v\|_{L_t^\infty \dot{H}^{\frac{3}{2}}([T, T_1]\times \R^5)}\|v\|_{L_t^2 L_x^{10}([T, T_1]\times \R^5)}^2\\
&\lesssim (C_1\delta_1)^3\ll C_1\delta_1.
\end{align*}
Next, taking advantage of the Strauss' inequality $|v(t,r)|\lesssim r^{-1}\big\|v(t, \cdot)\big\|_{\dot{H}^{\frac{3}{2}}}$, we have 
\begin{align*}
&\big\|v^3(\nabla_x^{\frac{1}{2}}v) r\big\|_{L_t^1L_x^2([T, T_1]\times \R^5)}
+\big\| v^4 r^{\frac{1}{2}}\big\|_{L_t^1L_x^2([T, T_1]\times \R^5)}\\
&\lesssim C_1\delta_1\big\|v^2(\nabla_x^{\frac{1}{2}}v)\big\|_{L_t^1L_x^2([T, T_1]\times \R^5)} + (C_1\delta_1)^{\frac{1}{2}}\big\|v^{\frac{7}{2}}\big\|_{L_t^1L_x^2([T, T_1]\times \R^5)}\\
&\lesssim  C_1\delta_1\big\|v\big\|_{L_t^2 L_x^{10}([T, T_1]\times \R^5)}^2\big\|\nabla_x^{\frac{1}{2}}v\big\|_{L_t^\infty L_x^{\frac{10}{3}}([T, T_1]\times \R^5)}\\
&+(C_1\delta_1)^{\frac{1}{2}}\big\|v\big\|_{L_t^2 L_x^{10}([T, T_1]\times \R^5)}^2\big\|v^{\frac{3}{2}}\big\|_{L_t^\infty L_x^{\frac{10}{3}}([T, T_1]\times \R^5)}\\
&\lesssim (C_1\delta_1)^{\frac{1}{2}}\big\|v\big\|_{L_t^2 L_x^{10}([T, T_1]\times \R^5)}^2\big\|v\big\|_{L_t^\infty \dot{H}^{\frac{3}{2}}([T, T_1]\times \R^5)}\ll C_1\delta_1.\\
\end{align*}
Finally, for the contribution of $v^3t^{-\frac{1}{2}}$, we use the fact that by the Huyghen's principle, the support of the function $v(t, r)$ is contained in the set $r<t+C$, and so 
\begin{align*}
&\big\|v^3 t^{-\frac{1}{2}}\big\|_{L_t^1L_x^2([T, T_1]\times \R^5)}\lesssim \big\|v\big\|_{L_t^2 L_x^{10}([T, T_1]\times \R^5)}^2\big\|v\big\|_{L_t^\infty L_x^{5}([T, T_1]\times \R^5)}\big\|t^{-\frac{1}{2}}\big\|_{L_t^\infty L_x^{10}([T, T_1]\times \{r\leq t+C\})}\\
&\lesssim (C_1\delta_1)^3. 
\end{align*}

{\it{The contribution of $-\frac{2\epsilon}{r^3}\big[\cos(2u_{approx}) - 1\big]$.}}  Here we distinguish between high and low frequency factors. Specifically, we write schematically
\[
\frac{2\epsilon}{r^3}\left[\cos(2u_{approx}) - 1\right] = \frac{2\epsilon}{r}P_{<t^{-\delta}}\left[\frac{\cos(2u_{approx}) - 1}{r^2}\right] + \frac{2\epsilon}{r}P_{\geq t^{-\delta}}\left[\frac{\cos(2u_{approx}) - 1}{r^2}\right]. 
\]
For the second term on the right, we exploit that 
\[
P_{\geq t^{-\delta}}\left[\frac{\cos(2u_{approx}) - 1}{r^2}\right] 
\]
enjoys a special smallness property. In fact, by direct computation, we get 
\begin{align*}
\nabla_x\left[\frac{\cos(2u_{approx}) - 1}{r^2}\right]  &= \nabla_x\left[\frac{u_{approx}^2}{r^2}\frac{\cos(2u_{approx}) - 1}{u_{approx}^2}\right]\\
&= \left[\nabla_x(\frac{u_{approx}}{r})\frac{u_{approx}}{r}\frac{\cos(2u_{approx}) - 1}{u_{approx}^2}\right]\\
&+\left[\frac{u_{approx}^2}{r^2}\nabla_x\left(\frac{\cos(2u_{approx}) - 1}{u_{approx}^2}\right)\right]\\
\end{align*}
and we bound these terms by $O\left(\frac{\log t}{t^3}\right)$. It follows that 
\[
\left|P_{\geq t^{-\delta}}\left[\frac{\cos(2u_{approx}) - 1}{r^2}\right]\right|\lesssim \frac{\log t}{t^{3-\delta}}.
\]
This allows to bound the high frequency term by 
\begin{align*}
\left\|\frac{2\epsilon}{r}P_{\geq t^{-\delta}}\left[\frac{\cos(2u_{approx}) - 1}{r^2}\right]\right\|_{\dot{H}^{\frac{1}{2}}(\R^5)}&\lesssim \left\|v\left\|_{L_x^{10}}\right\|\chi_{r\lesssim t}\right\|_{L_x^5}\left\|P_{\geq t^{-\delta}}\nabla_x^{\frac{1}{2}}\left[\frac{\cos(2u_{approx}) - 1}{r^2}\right]\right\|_{L_x^5(r\lesssim t)}\\
& + \big\|\nabla_x^{\frac{1}{2}}v\big\|_{L_x^{5}}\big\|\chi_{r\lesssim t}\big\|_{L_x^5}\left\|P_{\geq t^{-\delta}}\left[\frac{\cos(2u_{approx}) - 1}{r^2}\right]\right\|_{L_x^{10}(r\lesssim t)}.\\
\end{align*}
We conclude that 
\begin{align*}
&\left\|\frac{2\epsilon}{r}P_{\geq t^{-\delta}}\left[\frac{\cos(2u_{approx}) - 1}{r^2}\right]\right\|_{L_t^1\dot{H}^{\frac{1}{2}}([T, T_1]\times \R^5)}\\
&\lesssim \big\|v\big\|_{L_t^2 L_x^{10}([T, T_1]\times \R^5)}\left\|\left\| tP_{\geq t^{-\delta}}\nabla_x^{\frac{1}{2}}\left[\frac{\cos(2u_{approx}) - 1}{r^2}\right]\right\|_{L_x^{5}(r\lesssim t)}\right\|_{L_t^2[T, T_1]}\\
& +  \left\|\nabla_x^{\frac{1}{2}}v\right\|_{L_t^2L_x^{5}([T, T_1]\times \R^5)}\left\|\left\| tP_{\geq t^{-\delta}}\left[\frac{\cos(2u_{approx}) - 1}{r^2}\right]\right\|_{L_x^{10}(r\lesssim t)}\right\|_{L_t^2[T, T_1]}\\
&\lesssim \left[ \big\|v\big\|_{L_t^2 L_x^{10}([T, T_1]\times \R^5)} +  \big\|\nabla_x^{\frac{1}{2}}v\big\|_{L_t^2L_x^{5}([T, T_1]\times \R^5)}\right]\left\|\frac{\log t}{t^{1-\delta}}\right\|_{L_t^2[T, T_1]}\\
&\ll C_1\delta_1
\end{align*}
on account of $T\gg 1$. 
\\
Next, consider the low frequency term 
\begin{align*}
&\frac{2\epsilon}{r}P_{<t^{-\delta}}\left[\frac{\cos(2u_{approx}) - 1}{r^2}\right]\\
&=P_{<t^{-\frac{\delta}{2}}}\left(\frac{2\epsilon}{r}\right)P_{<t^{-\delta}}\left[\frac{\cos(2u_{approx}) - 1}{r^2}\right]\\
&+P_{\geq t^{-\frac{\delta}{2}}}\left(\frac{2\epsilon}{r}\right)P_{<t^{-\delta}}\left[\frac{\cos(2u_{approx}) - 1}{r^2}\right].\\
 \end{align*}
For the second term on the right, we get 
\begin{align*}
&\left\|P_{\geq t^{-\frac{\delta}{2}}}\left(\frac{2\epsilon}{r}\right)P_{<t^{-\delta}}\left[\frac{\cos(2u_{approx}) - 1}{r^2}\right]\right\|_{L_t^1\dot{H}^{\frac{1}{2}}([T, T_1]\times \R^5)}\\
&\lesssim \left\| t^{-\frac{3}{4}\delta}\langle\nabla_x\rangle^{\frac{1}{2}}P_{\geq t^{-\frac{\delta}{2}}}v\right\|_{L_t^\infty L_x^2([T, T_1]\times \R^5)}\left\|t^{\frac{3}{4}\delta}\langle\nabla_x\rangle^{\frac{1}{2}}P_{<t^{-\delta}}\left[\frac{\cos(2u_{approx}) - 1}{r^2}\right]\right\|_{L_t^1 L_x^\infty}
\end{align*}
and then use that from our bootstrap hypothesis we have 
\[
 \left\| t^{-\frac{3}{4}\delta}\langle\nabla_x\rangle^{\frac{1}{2}}P_{\geq t^{-\frac{\delta}{2}}}v\right\|_{L_t^\infty L_x^2([T, T_1]\times \R^5)}\lesssim C_1\delta_1
 \]
 while using Bernstein's inequality, we have 
 \begin{align*}
&\left\|t^{\frac{3}{4}\delta}\langle\nabla_x\rangle^{\frac{1}{2}}P_{<t^{-\delta}}\left[\frac{\cos(2u_{approx}) - 1}{r^2}\right]\right\|_{L_t^1 L_x^\infty([T, T_1]\times \R^5)}\\
&\lesssim \left\|\frac{u_{approx}}{r}\right\|_{L_t^2 L_x^{10+}([T, T_1]\times \R^5)}^2\ll 1
\end{align*}
on account of $T\gg 1$. The conclusion is that 
\begin{align*}
\left\|P_{\geq t^{-\frac{\delta}{2}}}\left(\frac{2\epsilon}{r}\right)P_{<t^{-\delta}}\left[\frac{\cos(2u_{approx}) - 1}{r^2}\right]\right\|_{L_t^1\dot{H}^{\frac{1}{2}}([T, T_1]\times \R^5)}\ll C_1\delta_1. 
\end{align*}
On the other hand, for the term where all factors have low frequency, i. e. 
\[
P_{<t^{-\frac{\delta}{2}}}\left(\frac{2\epsilon}{r}\right)P_{<t^{-\delta}}\left[\frac{\cos(2u_{approx}) - 1}{r^2}\right],
\]
we exploit the extra outer derivative and low frequency control (with a small loss): we have at fixed time $t$
\begin{align*}
&\left\|P_{<t^{-\frac{\delta}{2}}}(\frac{2\epsilon}{r})P_{<t^{-\delta}}\left[\frac{\cos(2u_{approx}) - 1}{r^2}\right]\right\|_{\dot{H}^{\frac{1}{2}}(\R^5)}\\
&\lesssim \left\|\nabla_x^{\frac{1}{2}}P_{<t^{-\frac{\delta}{2}}}v\right\|_{L_x^{\frac{10}{3}}}\left\|P_{<t^{-\delta}}\left[\frac{\cos(2u_{approx}) - 1}{r^2}\right]\right\|_{L_x^5(r\lesssim t)}\\&
+ \left\|P_{<t^{-\frac{\delta}{2}}}v\right\|_{L_x^{\frac{10}{3}}}\left\|\nabla_x^{\frac{1}{2}}P_{<t^{-\delta}}\left[\frac{\cos(2u_{approx}) - 1}{r^2}\right]\right\|_{L_x^5(r\lesssim t)}
\lesssim t^{-\frac{\delta}{4}+\gamma - 1}T^{-\gamma}C_1\delta_1
\end{align*}
where we have taken advantage of 
\[
 \left\|P_{<t^{-\frac{\delta}{2}}}v(t, \cdot)\right\|_{L_x^{\frac{10}{3}}}\lesssim \left\|v(t, \cdot)\right\|_{\dot{H}^1}\lesssim \left(\frac{t}{T}\right)^{\gamma}.
 \]
 Also, the additional factors $t^{-\frac{\delta}{4}}$ which ensure integrability stem from the operator $\nabla_x^{\frac{1}{2}}P_{<t^{-\delta}}$. 
Thus if we arrange (as we may) that $\gamma\ll \delta$, we find 
 \begin{align*}
 &\left\|P_{<t^{-\frac{\delta}{2}}}(\frac{2\epsilon}{r})P_{<t^{-\delta}}\left[\frac{\cos(2u_{approx}) - 1}{r^2}\right]\right\|_{L_t^1\dot{H}^{\frac{1}{2}}([T, T_1]\times\R^5)}&\lesssim C_1\delta_1\left\|t^{-\frac{\delta}{4}+\gamma - 1}T^{-\gamma}\right\|_{L_t^1[T, T_1]}\\
 &\ll C_1\delta_1
 \end{align*}
provided $T$ is sufficiently large (in relation to $\delta^{-1}$). This finally concludes bounding the contribution from 
\[
-\frac{2\epsilon}{r^3}\left[\cos(2u_{approx}) - 1\right]. 
\]

{\it{The contribution of $\frac{\sin(2u_{approx})}{r^3}\big(\cos(2\epsilon) - 1\big)$.}} We again use the high-low frequency method, which is somewhat simpler to implement here: write 
\begin{align*}
&\frac{\sin(2u_{approx})}{r^3}\big(\cos(2\epsilon) - 1\big)\\
&=P_{<t^{-\delta}}\left(\frac{\sin(2u_{approx})}{r}\right)\left(\frac{\cos(2\epsilon) - 1}{r^2}\right) + P_{\geq t^{-\delta}}\left(\frac{\sin(2u_{approx})}{r}\right)\left(\frac{\cos(2\epsilon) - 1}{r^2}\right).
\end{align*}
For the second term on the right, use that 
\begin{align*}
&\left\|\langle\nabla_x\rangle^{\frac{1}{2}}P_{\geq t^{-\delta}}\left(\frac{\sin(2u_{approx})}{r}\right)\right\|_{L_x^{\frac{10}{3}}(r\lesssim t)}\lesssim \frac{\log t}{t^{\frac{1}{2}-\delta}},\\
&\left\|P_{\geq t^{-\delta}}\left(\frac{\sin(2u_{approx})}{r}\right)\right\|_{L_x^{10}(r\lesssim t)}\lesssim \frac{\log t}{t^{\frac{3}{2}-\delta}},
\end{align*}
and so we find 
\begin{align*}
&\left\|P_{\geq t^{-\delta}}\left(\frac{\sin(2u_{approx})}{r}\right)\left(\frac{\cos(2\epsilon) - 1}{r^2}\right)\right\|_{L_t^1\dot{H}^{\frac{1}{2}}([T, T_1]\times\R^5)}\\
&\lesssim \left\|\nabla_x^{\frac{1}{2}}P_{\geq t^{-\delta}}\left(\frac{\sin(2u_{approx})}{r}\right)\right\|_{L_t^\infty L_x^{\frac{10}{3}}([T, T_1]\times\R^5)}\left\|v\right\|_{L_t^2L_x^{10}([T, T_1]\times\R^5)}^2\\
&+ \left\|P_{\geq t^{-\delta}}\left(\frac{\sin(2u_{approx})}{r}\right)\right\|_{L_t^2 L_x^{10}([T, T_1]\times\R^5)}\left\|\nabla^{\frac{1}{2}}v\right\|_{L_t^\infty L_x^{\frac{10}{3}}([T, T_1]\times\R^5)}\left\|v\right\|_{L_t^2 L_x^{10}([T, T_1]\times\R^5)}\\
&\lesssim (C_1\delta_1)^2\ll C_1\delta_1. 
\end{align*}
For the low frequency term, we get 
\begin{align*}
&\left\|P_{<t^{-\delta}}\left(\frac{\sin(2u_{approx})}{r}\right)\left(\frac{\cos(2\epsilon) - 1}{r^2}\right)\right\|_{L_t^1\dot{H}^{\frac{1}{2}}([T, T_1]\times\R^5)}\\
&\lesssim \left\|\nabla_x^{\frac{1}{2}}v\right\|_{L_t^2 L_x^5([T, T_1]\times\R^5)}\left\|t^{-\gamma}v\right\|_{L_t^\infty L_x^{\frac{10}{3}}([T, T_1]\times\R^5)}\left\| t^{\gamma}P_{<t^{-\delta}}\left(\frac{\sin(2u_{approx})}{r}\right)\right\|_{L_t^2 L_x^{\infty}}\\
& + \left\|v\right\|_{L_t^2 L_x^{10}([T, T_1]\times\R^5)}\left\|t^{-\gamma}v\right\|_{L_t^\infty L_x^{\frac{10}{3}}([T, T_1]\times\R^5)}\left\| t^{\gamma}\nabla_x^{\frac{1}{2}}P_{<t^{-\delta}}\left(\frac{\sin(2u_{approx})}{r}\right)\right\|_{L_t^2 L_x^{10}}\\
&\lesssim (C_1\delta_1)^2\ll C_1\delta_1. 
\end{align*}

{\it{The contribution of $\frac{e_0}{r}$.}} Here we immediately check that $\left\|\frac{e_0}{r}\right\|_{L_t^1\dot{H}^{\frac{1}{2}}([T, T_1]\times\R^5)}\lesssim T^{-1}\ll C_1\delta_1$ if $T$ is sufficiently large,  which is as desired. 
The proof of Proposition~\ref{prop:bootstrap} is thereby concluded.

\section{Proof of the main result}
Here we shall show how to conclude Theorem \ref{th:main} building on the previous sections.
Indeed Theorem \ref{th:main} follows from Proposition \ref{prop:vMain}, the solution to \eqref{eq:scalar wave equation}
being given by $u:=u_{approx}+ \epsilon(t, r)$ where $\epsilon(t, r):= r v$ with $v$ provided by Proposition \ref{prop:vMain}.
The initial data $(f,g)$ are given by $(u(T, \cdot), u_t(T, \cdot))$ where $T>0$ depending on $\tilde{d}_1$ is given by Proposition \ref{prop:vMain}.
Since $(v, v_t)$ does have finite critical norm, but the approximate solution $(u_{approx}, \partial_t u_{approx})$ does not, we easily conclude that 
the initial data have infinite critical norm.
Clearly the perturbation $(\epsilon, \epsilon_t)$ lies in the spaces $\dot{H}^{s}\times \dot{H}^{s-1}$ for $s>\frac{3}{2}$ by construction (remind that $(v,v_t)$ is compactly supported).
Moreover, due to the asymptotics for $r\rightarrow \infty$ of the self--similar solutions given by formulas \eqref{eq:first behavior at infty} and \eqref{eq:behavior at infty}, in the small and large case respectively,
we have that $( u_{approx}- q_1, \partial_t u_{approx})$, $( u_{approx}- c_1, \partial_t u_{approx})$ respectively, has finite norm in $\dot{H}^{s}\times \dot{H}^{s-1}$ for $s>\frac{3}{2}$: that is how we understand the finiteness in 
$\dot{H}^{s}\times \dot{H}^{s-1}$ for $s>\frac{3}{2}$ of the data $(f,g)$ as claimed in Theorem \ref{th:main}.
Of course the condition $\| f\|_{L^\infty(r\geq1)}> M,$ for arbitrary $M>0$, can be achieved simply by choosing $\wt{d}_1> M^2$ in the context of large self--similar solutions as
provided by Lemma \ref{l:lemma5}.
Finally the stability under a certain class of perturbations is a consequence of the fact that $v$ belongs to an open set with respect to the norms of Proposition \ref{prop:local exist}.

\end{proof}

\begin{bibdiv}
\begin{biblist}

\bib{biz}{article}{
   author={Bizo{\'n}, Piotr},
   title={Equivariant self-similar wave maps from Minkowski spacetime into
   3-sphere},
   journal={Comm. Math. Phys.},
   volume={215},
   date={2000},
   number={1},
   pages={45--56},
}

\bib{casta}{article}{
   author={Cazenave, Thierry},
   author={Shatah, Jalal},
   author={Tahvildar-Zadeh, A. Shadi},
   title={Harmonic maps of the hyperbolic space and development of
   singularities in wave maps and Yang-Mills fields},
   journal={Ann. Inst. H. Poincar\'e Phys. Th\'eor.},
   volume={68},
   date={1998},
   number={3},
   pages={315--349},
}
 
\bib{don}{article}{
   author={Donninger, Roland},
   title={On stable self-similar blowup for equivariant wave maps},
   journal={Comm. Pure Appl. Math.},
   volume={64},
   date={2011},
   number={8},
   pages={1095--1147},
}

\bib{donac}{article}{
   author={Donninger, Roland},
   author={Aichelburg, Peter C.},
   title={Spectral properties and linear stability of self-similar wave
   maps},
   journal={J. Hyperbolic Differ. Equ.},
   volume={6},
   date={2009},
   number={2},
   pages={359--370},
}
	
	\bib{donac1}{article}{
   author={Donninger, Roland},
   author={Aichelburg, Peter C.},
   title={A note on the eigenvalues for equivariant maps of the $\rm SU(2)$
   sigma-model},
   journal={Appl. Math. Comput. Sci.},
   volume={1},
   date={2010},
   number={1},
   pages={73--82},
}

\bib{dosca}{article}{
   author={Donninger, Roland},
   author={Sch{\"o}rkhuber, Birgit},
   author={Aichelburg, Peter C.},
   title={On stable self-similar blow up for equivariant wave maps: the
   linearized problem},
   journal={Ann. Henri Poincar\'e},
   volume={13},
   date={2012},
   number={1},
   pages={103--144},
}

\bib{ge}{article}{
   author={Germain, Pierre},
   title={Besov spaces and self-similar solutions for the wave-map equation},
   journal={Comm. Partial Differential Equations},
   volume={33},
   date={2008},
   number={7-9},
   pages={1571--1596},
}


\bib{kr}{article}{
   author={Krieger, Joachim},
   title={Global regularity of wave maps from ${\bf R}^{3+1}$ to
   surfaces},
   journal={Comm. Math. Phys.},
   volume={238},
   date={2003},
   number={1-2},
   pages={333--366},
}

\bib{krsc}{article}{
   author={Krieger, Joachim},
   author={Schlag, Wilhelm},
   title={Large global solutions for energy supercritical nonlinear wave equations on ${\bf R}^{3+1}$},
   journal={J. Anal. Math. (accepted)},
}

\bib{scu}{article}{
   author={Schoen, Richard},
   author={Uhlenbeck, Karen},
   title={Regularity of minimizing harmonic maps into the sphere},
   journal={Invent. Math.},
   volume={78},
   date={1984},
   number={1},
   pages={89--100},
}

\bib{sh}{article}{
   author={Shatah, Jalal},
   title={Weak solutions and development of singularities of the ${\rm
   SU}(2)$ $\sigma$-model},
   journal={Comm. Pure Appl. Math.},
   volume={41},
   date={1988},
   number={4},
   pages={459--469},
}

\bib{sta}{article}{
   author={Shatah, Jalal},
   author={Tahvildar-Zadeh, A. Shadi},
   title={On the Cauchy problem for equivariant wave maps},
   journal={Comm. Pure Appl. Math.},
   volume={47},
   date={1994},
   number={5},
   pages={719--754},
}

\bib{sid}{article}{
   author={Sideris, Thomas C.},
   title={Global existence of harmonic maps in Minkowski space},
   journal={Comm. Pure Appl. Math.},
   volume={42},
   date={1989},
   number={1},
   pages={1--13},
}

\bib{tac}{article}{
   author={Tachikawa, Atsushi},
   title={Rotationally symmetric harmonic maps from a ball into a warped
   product manifold},
   journal={Manuscripta Math.},
   volume={53},
   date={1985},
   number={3},
   pages={235--254},
}

\bib{tusp}{article}{
   author={Turok, Neil},
   author={Spergel, David},
   title={Global Texture and the Microwave Background},
   journal={Phys.Rev.Lett.},
   volume={64},
   date={1990},
   number={23},
   pages={2736--2739},
}

\end{biblist}
\end{bibdiv}

\end{document}